\documentclass{article}

\usepackage{epsfig}
 \usepackage{epstopdf}
\usepackage[T1]{fontenc}
\usepackage{geometry}
\usepackage{amsbsy,amsmath,latexsym,amsfonts, epsfig, color, authblk, amssymb, graphics, bm}
\usepackage{epsf,slidesec,epic,eepic}
\usepackage{fancybox}
\usepackage{fancyhdr}
\usepackage{setspace}
\usepackage{nccmath}
\usepackage[colorlinks,linkcolor=black,anchorcolor=black,citecolor=black,hyperindex,CJKbookmarks]{hyperref}
\newenvironment{proof}{{\noindent \bf Proof.}}{\hfill$\Box$\medskip}

\newtheorem{theorem}{Theorem}[section]
\newtheorem{corollary}[theorem]{Corollary}
\newtheorem{lemma}[theorem]{Lemma}

\def \b{\beta}

\def \l{\lambda}

\def \G{\Gamma}

\def \d{\delta}
\def \e{\epsilon}

\def \w{\omega}

\def \S{\Sigma}

\def \N{\mathbb{N}}

\def \A{\mathcal{A}}


\begin{document}

\title{ The topological property of the irregular sets on the lengths of basic intervals in beta-expansions\footnotetext {* Corresponding author}
\footnotetext {2010 AMS Subject Classifications: 11K55, 28A80,
11B05}}
\author{  Lixuan Zheng$^\dag$, Min Wu$^\dag$ and Bing Li$^{\dag, *}$\\
\small \it $\dag$ Department of Mathematics\\
\small \it South China University of Technology\\
\small \it Guangzhou 510640, P.R. China\\
\small \it E-mails: z.lixuan@mail.scut.edu.cn, wumin@scut.edu.cn and
scbingli@scut.edu.cn}
\date{5th March,2015}
\date{}
\maketitle
\begin{center}
\begin{minipage}{120mm}{\small {\bf Abstract.} Let $\beta > 1$ be a real number. A basic interval of order $n$ is a set of real numbers in $(0,1]$ having the same first $n$ digits in their $\beta$-expansion which contains $x\in(0,1]$, denote by $I_n(x)$ and write the length of $I_n(x)$ as  $|I_n(x)|$. In this paper, we prove that the extremely irregular set containing points $x \in [0, 1]$ whose upper limit of $\frac{-\log_\beta |I_n(x)|}{n}$ equals to $1+\l(\beta)$ is residual for every $\lambda(\beta)>0$, where $\l(\beta)$ is a constant depending on $\beta$.}
\end{minipage}
\end{center}

\vskip0.5cm {\small{\bf Key words and phrases} beta-expansion; irregular set; extremely irregular number; residual }\vskip0.5cm

\section{Introduction}
Fix a real number $\b > 1$. Define the \emph{$\b$-transformation} $T_{\b}:(0,1] \rightarrow (0,1]$ by $$T_{\b}x = \b x-\lceil\b x\rceil + 1,$$ where $\lceil x \rceil$ stands for the smallest integer no less than $x$. It is well known \cite{R} that, by the iteration of $T_{\b}$, every $x \in (0, 1]$ can be  written as:
\begin{equation}\label{1.1}
x=\frac{\epsilon_1(x,\b)}{\b}+\cdots+\frac{\e_n(x,\b)+T_{\b}^n
x}{\b^n}=\sum_{n=1}^\infty \frac{\e_n(x,\b)}{\b^n},
\end{equation}
where, for each $n\geq1$,$$\e_n(x,\b)=\lceil \b T_{\b}^{n-1}x
\rceil-1$$ is called the \emph{$n$-th digit} of $x$. We identify $x$
with its digit
sequence$$\e(x,\b):=(\e_1(x,\b),\ldots,\e_n(x,\b),\ldots)$$and the digit sequence $\e(x,\b)$ is said to be the \emph{$\b$-expansion} of $x$. Sometimes we write $\e_n(x)$ instead of $\e_n(x,\beta)$ if $\beta$ is fixed.

For an admissible word $(\e_1,\ldots,\e_n)$, i.e.,\ a prefix of the
digit sequence for some $x \in (0, 1]$, we define the \emph{basic
interval of order $n$} which is denoted by $I(\e_1,\ldots,\e_n)$ as
$$I(\e_1 ,\ldots,\e_n):= \{x \in (0,1]: \e_j(x,\b)=\e_j,\ {\rm for\ all}\  1 \leq j \leq n\}.$$We write the basic interval of order $n$
containing $x$ as $I_n(x)$. A simple fact of the basic interval $I_n(x)$ is that it is a left-open and right-closed interval, see Lemma \ref{i} for more details. We write the length of $I_n(x)$ as $|I_n(x)|$.

The $\b$-expansion of the unit $1$ has played an important role not only in researching the dynamical properties of the orbit of $1$, but also in estimating the length of $I_n(x)$ (\cite{BW}, see also \cite{LW,BT}). Let$$1=\frac{\e_1^\ast}{\b}+\cdots+\frac{\e_n^\ast}{\b^n}+\cdots$$be the $\b$-expansion of the unit $1$. For each integer $n \geq 1$,
denote by $t_n = t_n(\b)$ the maximal length of consecutive zeros just follow the $n$-th digit of the $\b$-expansion of $1$. That is,
\begin{equation}\label{1.3}
t_n=t_n(\b) := \max\{k \geq 0:\e_{n+1}^\ast = \e_{n+2}^\ast =
\cdots= \e_{n+k}^\ast=0\}.
\end{equation}
Define
\begin{equation}\label{1.4}
\l(\b) = \limsup_{n\rightarrow \infty}\frac{\G_n(\b)}{n},
\end{equation}
where$$\G_n = \G_n(\b) := \max_{1 \leq k \leq n}t_k(\b).$$

The estimation on the lengths of basic intervals is very useful to study the fractals in $\b$-expansion such as the multifractal spectra for the recurrence rates of $\b$-transformations \cite{BB}, the Diophantine properties of the orbits in $\b$-expansions \cite{BW} and so on. Fan and Wang \cite{AB} established a relationship between the length of $I_n(x)$ and the $\b$-expansion of $1$ and gave a way to calculate the length of $I_n(x)$, see Theorem \ref{th1.1} for more details. Furthermore, they introduced and studied the quantities which describe the growth
of the length of $I_n(x)$. More precisely, for any $x \in (0,1]$, define the lower and upper density at $x$ for $\b$-expansion respectively, i.e.,\
$$\underline{D}(x) = \liminf_{n\rightarrow
\infty}\frac{-\log_\b | I_n(x)| }{n}\  \  \  {\rm and}\   \ \overline{D}(x) = \limsup_{n\rightarrow \infty}\frac{-\log_\b
|I_n(x)|}{n}.$$
It is known \cite{LW} that for any $x \in (0, 1]$, we have
\begin{equation}\label{up}
\underline{D}(x) = 1,\ 1 \leq \overline{D}(x) \leq 1+\l(\b).
\end{equation}
Shannon-McMillan-Breiman Theorem applied to Parry measure \cite{PA} gives that $\underline{D}(x) = \overline{D}(x)=1$ for Lebesgue almost all $x\in(0,1]$. A special case is $\l(\b)=0$, we obtain that $\underline{D}(x) = \overline{D}(x)=1$ which indicates that the limit of $\frac{-\log_\b
|I_n(x)|}{n}$ exists for all $x\in (0,1]$. The set of such $\b$ with $\l(\b)=0$ in $(1,+\infty)$ is of full Lebesgue measure \cite{BT}. Then people turn to focus on the exceptional set with respect to the upper density for the case $\l(\b)>0$. From now on, unless other indicated, we only consider the case $\l(\b)>0$. For any $1 < \d \leq 1 + \l(\b)$, define $$E_\d = \{x \in (0, 1] : \overline{D}(x) = \d \},$$which is a Lebesgue null set. The points in $E_\d$ with $1<\d \leq 1+\l(\b)$ are said to be \emph{$\d$-irregular}. Fan and Wang \cite{AB} showed that
\begin{equation}\label{th1.3}
\dim_{\rm H}E_\d=\frac{\l(\b)+1-\d}{\d \cdot \l(\b)},
\end{equation}for every $1<\d \leq 1+\l(\b)$, where $\dim_{\rm H}$ denotes the Hausdorff
dimension.

One natural question is how large the sets $E_\d$ are in the sense of topology. Motivated by this, we devote to showing that the extremely irregular set $E_{1+\l(\b)}$ is residual, see Theorem \ref{r1} for more details.

To state our results, we begin by introducing some notation. For all $x \in (0,1]$, let $A(D(x))$ denote the set of accumulation points of $
\frac{-\log_\b |I_n(x)|}{n} $ as $n \nearrow \infty$, that is,
$$A(D(x)) = \left\{y \in [1,1+\l(\b)]:\lim_{k \rightarrow \infty}\frac{-\log_\b |I_{n_k}(x)|}{n_k}  = y\ {\rm for\ some\ } \{n_k\}_{k\geq1} \nearrow \infty\right\}.$$

For an integer $n\geq 1$, denote by $k_n^\ast (x)$ the largest length of the suffixes of $(\e_1(x,\b),\ldots,\e_n(x,\b))$ agreeing
with the prefix of the digit sequence of the unit $1$. In other words,
\begin{equation}\label{kn}
k_n^\ast (x)= k_n^\ast (x,\b):= \inf\{k \geq 0:
(\e_{k+1}(x,\b),\ldots,\e_n(x,\b))=(\e_1^\ast,\ldots,\e_{n-k}^\ast)\}.
\end{equation}
Define
\begin{equation}\label{lx}
\tau(x) = \tau(x,\b):=\limsup_{n \rightarrow
\infty}\frac{t_{n-k_n^\ast(x)}}{n}.
\end{equation}
where $t_n$ is defined as (\ref{1.3}). Since $n-k_n^\ast(x) \leq n$ and by the definition of $\G_n$, we know that $t_{n-k_n^\ast(x)} \leq \G_n$ which implies $\tau(x)\leq \l(\b)$ for any $x \in (0,1]$.

We shall prove that the set $A(D(x))$ is always a closed interval. More precisely, we will show:
\begin{theorem}\label{closed interval}
Let $x \in (0,1]$. Then, $A(D(x))=[1,1+\tau(x)].$
\end{theorem}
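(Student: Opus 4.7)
The plan is to reduce the question to one about the combinatorial sequence $\tau_n(x) := t_{n - k_n^\ast(x)}/n$. Using the Fan--Wang formula from Theorem~\ref{th1.1}, one has the pointwise estimate $-\log_\b |I_n(x)| = n + t_{n-k_n^\ast(x)} + O(1)$, and dividing by $n$ gives $\frac{-\log_\b |I_n(x)|}{n} = 1 + \tau_n(x) + O(1/n)$. Consequently $A(D(x))$ equals $1$ plus the set of accumulation points of $(\tau_n(x))$, and the problem reduces to showing that the latter set equals $[0, \tau(x)]$. The inclusion $\subseteq$ is immediate, since $\tau_n(x) \geq 0$ and $\limsup_n \tau_n(x) = \tau(x)$ by \eqref{lx}.

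For the reverse inclusion, extract a subsequence $(n_k)$ with $\tau_{n_k}(x) \to \tau(x)$. Write $m_k := n_k - k_{n_k}^\ast(x)$ and $r_k := t_{m_k}$, so that $r_k/n_k \to \tau(x)$. The central observation is a \emph{zero-forcing} step coming from Parry's admissibility criterion: by the definition of $k_{n_k}^\ast(x)$ we have $(\e_{n_k - m_k + 1}(x), \ldots, \e_{n_k}(x)) = (\e_1^\ast, \ldots, \e_{m_k}^\ast)$, and admissibility of $\e(x)$ forces
\begin{equation*}
(\e_1^\ast, \ldots, \e_{m_k}^\ast, \e_{n_k+1}(x), \e_{n_k+2}(x), \ldots) \leq_{\mathrm{lex}} \e^\ast.
\end{equation*}
Since $r_k = t_{m_k} \geq 1$ gives $\e_{m_k + 1}^\ast = 0$, this lex constraint yields $\e_{n_k + 1}(x) \leq 0$, hence $\e_{n_k+1}(x) = 0$. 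Iterating through the length-$r_k$ block of zeros in $\e^\ast$ immediately following position $m_k$ gives $\e_{n_k + j}(x) = 0$ for every $1 \leq j \leq r_k$. Combined with the universal one-step bound $m_{n+1} \leq m_n + 1$ (where $m_n := n - k_n^\ast(x)$), a short induction starting from $m_{n_k} = m_k$ shows that $m_{n_k + j} = m_k + j$ exactly for $0 \leq j \leq r_k$, so $t_{m_{n_k+j}} = r_k - j$ and
\begin{equation*}
\tau_{n_k+j}(x) = \frac{r_k - j}{n_k + j}, \qquad 0 \leq j \leq r_k.
\end{equation*}

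To finish, note that $h(\alpha) := \tau(x)(1-\alpha)/(1 + \alpha \tau(x))$ is a continuous strictly decreasing bijection from $[0,1]$ onto $[0, \tau(x)]$. For any $\gamma \in [0, \tau(x)]$, set $\alpha := h^{-1}(\gamma)$ and $j_k := \lfloor \alpha r_k \rfloor$. Since $r_k \to \infty$ and $r_k/n_k \to \tau(x)$, dividing numerator and denominator of $(r_k - j_k)/(n_k + j_k)$ by $n_k$ yields $\tau_{n_k + j_k}(x) \to h(\alpha) = \gamma$, so $1 + \gamma \in A(D(x))$. This gives $[1, 1+\tau(x)] \subseteq A(D(x))$ when $\tau(x) > 0$; the degenerate case $\tau(x) = 0$ forces $\overline{D}(x) = \underline{D}(x) = 1$ via \eqref{up}, so $A(D(x)) = \{1\}$.

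The main obstacle is the zero-forcing step together with the induction establishing $m_{n_k + j} = m_k + j$: one must carefully verify that Parry's admissibility condition propagates forced zeros throughout the entire length-$r_k$ block, and that the induction delivers \emph{equality} $m_{n_k+j} = m_k + j$ rather than merely $\geq$. The latter rests on the universal one-step monotonicity bound $m_{n+1} \leq m_n + 1$, while the former exploits the fact that $\e_{m_k + j}^\ast = 0$ for $1 \leq j \leq r_k$.
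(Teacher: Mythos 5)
Your proposal is correct, but it reaches the inclusion $[1,1+\tau(x)]\subseteq A(D(x))$ by a genuinely different route than the paper. The first half coincides: your reduction $-\log_\b|I_n(x)|=n+t_{n-k_n^\ast(x)}+O(1)$ via Theorem \ref{th1.1} and (\ref{1.6}) is exactly the content of the paper's Lemma \ref{sup}, which establishes $\overline{D}(x)=1+\tau(x)$. For the filling-in step, however, the paper uses a soft real-analysis argument that never returns to the digit sequence: since $I_{n+1}(x)\subseteq I_n(x)$, the quantity $-\log_\b|I_n(x)|$ is nondecreasing in $n$, so $\frac{-\log_\b|I_n(x)|}{n}$ can drop by at most the factor $\frac{n}{n+1}$ in one step; picking downcrossings of any level $a\in(1,1+\tau(x))$ between the $\liminf$ ($=1$) and the $\limsup$ ($=1+\tau(x)$) then squeezes a subsequence to $a$. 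You instead construct the realizing subsequences explicitly: Parry's criterion forces a block of $t_{m_k}$ zeros after a maximal match with $\e^\ast$, the one-step bound $m_{n+1}\le m_n+1$ upgrades this to the equality $m_{n_k+j}=m_k+j$, and the resulting values $\frac{r_k-j}{n_k+j}$ sweep out $[0,\tau(x)]$. Both arguments are sound (I checked your zero-forcing and the induction; the only cosmetic point is that Theorem \ref{P} is stated for $i\ge1$, so the case $k_{n_k}^\ast(x)=0$ should be covered by noting $\e(x)\le_{\rm lex}\e^\ast$ since $x\le1$). The paper's version is shorter and uses only the monotonicity of $|I_n(x)|$; yours is more informative, exhibiting exactly which indices produce each accumulation point and in particular showing directly that $0$ is an accumulation point of $\tau_n(x)$ without appealing to (\ref{up}).
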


The above theorem indicates that the set $E_\d$ can be written as $\{x \in (0,1]:A(D(x))=[1,\d]\}.$  An extreme case is that $\d=1+\l(\b)$ which means that the accumulation points of $\frac{-\log_\b | I_n(x)| }{n}$ can contain any possible value in $[1,1+\l(\b)]$. So the points in $E_{1+\l(\b)}$ is said to be extremely irregular and for convenience, we write $E=E_{1+\l(\b)}$.

The following main theorem illustrates that the extremely irregular set $E$ is large from a topological viewpoint for every $\l(\b)>0$. An important point we should notice is that not only the Lebesgue measure of $E$ is $0$, but also its Hausdorff dimension is $0$. This result is somewhat similar to Olsen's work \cite{O} on the extremely non-normal number.
\begin{theorem}\label{r1}
Let $\b>1$ with $\l(\b)>0$. Then the set $E$ is residual, in other words, $[0,1]\setminus E $ is of the first category. In particular, the set $E$ is of the second category.
\end{theorem}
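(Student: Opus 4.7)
The plan is to exhibit a dense $G_\delta$ subset of $E$ and invoke Baire's theorem. By Theorem~\ref{closed interval}, $x\in E$ iff $\tau(x)=\l(\b)$, which (since $\tau(x)\le\l(\b)$ always) is equivalent to $\tau(x)\ge\l(\b)$. Unfolding the $\limsup$ in \eqref{lx},
\[
E = \bigcap_{m\ge1}\bigcap_{N\ge1} V_{m,N}, \qquad
V_{m,N} := \left\{x \in (0,1]: \exists\, n\ge N,\ \frac{t_{n-k_n^\ast(x)}(\b)}{n} > \l(\b)-\frac{1}{m}\right\}.
\]
For each $n$ the condition in $V_{m,N}$ depends only on the first $n$ digits, so $V_{m,N}$ is a union of basic intervals $(a,b]$; it therefore coincides with $\mathrm{int}(V_{m,N})$ outside a countable (hence meager) set of endpoints, and it suffices to show each $\mathrm{int}(V_{m,N})$ is dense.

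For density, fix an arbitrary basic interval $I(\e_1,\ldots,\e_K)$. Since $\l(\b)=\limsup_n \G_n/n>0$, I can choose $j\to\infty$ with $t_j(\b)/j>\l(\b)-1/(2m)$, by taking $j$ to be the index where $\G_n=t_j$ is attained along a subsequence realizing the limsup (then $j\le n$ gives $t_j/j\ge t_j/n$, and $j\to\infty$ because $t_k$ is bounded on any finite range). Put $r:=\G_K+1$ and
\[
w := (\e_1,\ldots,\e_K,\underbrace{0,\ldots,0}_{r\ \text{zeros}},\e_1^\ast,\ldots,\e_j^\ast),\qquad n':=K+r+j.
\]
Granting that $w$ is admissible (see below), for any $x\in I(w)$ the last $j$ digits of $(\e_1(x),\ldots,\e_{n'}(x))$ match $\e_1^\ast\cdots\e_j^\ast$, while no longer suffix can match because $\e_{n'-j}(x)=0<\e_1^\ast=\lceil\b\rceil-1$. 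Hence $k_{n'}^\ast(x)=n'-j$, and
\[
\frac{t_{n'-k_{n'}^\ast(x)}(\b)}{n'} = \frac{t_j(\b)}{K+r+j} > \l(\b)-\frac{1}{m}
\]
once $j$ is large enough (with also $n'\ge N$), so $I(w)\subseteq V_{m,N}$. Since any open subset of $[0,1]$ contains a basic interval of sufficiently large order, $\mathrm{int}(V_{m,N})$ meets every open set, and $\bigcap_{m,N}\mathrm{int}(V_{m,N})$ is the desired dense $G_\delta$ contained in $E$.

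The main obstacle is the admissibility of $w$ (so that $I(w)\ne\emptyset$). By Parry's lexicographic criterion this requires $(w_l,\ldots,w_{n'})\le_{\mathrm{lex}}(\e_1^\ast,\ldots,\e_{n'-l+1}^\ast)$ for every $l$. The delicate case is $l\le K$ with $(\e_l,\ldots,\e_K)=(\e_1^\ast,\ldots,\e_{K-l+1}^\ast)$: here the buffer length $r=\G_K+1>t_{K-l+1}$ is chosen precisely so that a zero of $w$ meets a digit $\ge 1$ of $\e^\ast$ \emph{inside} the buffer, giving a strict lex inequality before the appended $\e_1^\ast$ could overshoot. Shifts with $l$ inside the buffer or inside the appended block $\e_1^\ast\cdots\e_j^\ast$ are immediate from $0<\e_1^\ast$ and from the standard fact $\sigma^m(\e^\ast)\le_{\mathrm{lex}}\e^\ast$, respectively.
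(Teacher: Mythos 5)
Your argument is sound in outline and reaches the result by a genuinely different route from the paper's. The paper does not decompose $E$ at all: it builds a single explicit set $U=\bigcap_{n}\bigcup_{k\ge n}\bigcup_{\e}\mathrm{int}\big(I(\e,0^{\G_k+1},\w_1,\ldots,\w_k)\big)$ out of blocks $\w_j=(\e_1^\ast,\ldots,\e_{m_j}^\ast,0^{t_{m_j}+1})$ attached along a rapidly growing sequence $m_k$ with $\l(\b)=\lim_k t_{m_k}/m_k$, proves $U\subset E$ by estimating $|I_{n_k}(x)|$ directly through the fullness of $I_{n_k+t_{m_k}+1}(x)$, and proves density by a nested-interval construction that requires the auxiliary topological Lemma \ref{in}. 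You instead use Lemma \ref{sup} (equivalently Theorem \ref{closed interval}) to turn $E$ into the purely combinatorial set $\{x:\tau(x)\ge\l(\b)\}$, unfold the $\limsup$ into countably many conditions each depending on only finitely many digits, and verify density of each piece by a one-block local modification $(\e_1,\ldots,\e_K,0^{\G_K+1},\e_1^\ast,\ldots,\e_j^\ast)$. This buys a shorter proof with no sequence $m_k$, no nested intersection, and no analogue of Lemma \ref{in}; the price is that you rely on Section 3 of the paper (which the paper's own proof of Theorem \ref{r1} does not) and you must pin down $k_{n'}^\ast(x)$ exactly.

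That last point is the one place where your justification is thinner than it needs to be. You assert $k_{n'}^\ast(x)=n'-j$ ``because $\e_{n'-j}(x)=0<\e_1^\ast$,'' but that observation only excludes a matching suffix of length $j+1$. A matching suffix of length $j+s$ with $2\le s\le r$ is excluded because its \emph{first} digit lies in the zero buffer while $\e_1^\ast\ge 1$; and a suffix of length $j+r+u$ with $1\le u\le K$ is excluded because the match would force $\e_{u+1}^\ast=\cdots=\e_{u+r}^\ast=0$, i.e.\ $t_u\ge r=\G_K+1$, contradicting $t_u\le\G_K$. This is exactly the buffer argument you already deploy for admissibility, so the gap closes immediately, but as written the identity $k_{n'}^\ast(x)=n'-j$ --- on which the whole estimate $t_{n'-k_{n'}^\ast(x)}/n'=t_j/n'$ rests, since a smaller $k_{n'}^\ast$ could land on an index with small $t$ --- is not fully justified. (Also, admissibility of $w$ follows at once from Theorem \ref{lem3.2}(4) and (1), so your hand-verification of Parry's criterion could simply be replaced by that citation.)
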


Noting that $E \cap E_\d=\emptyset$ for all $1<\d<1+\l(\b)$, the following corollary is immediate.
\begin{corollary}
Let $\b>1$ with $\l(\b)>0$, then $E_\d$ is of the first category for every $1<\d<1+\l(\b)$.
\end{corollary}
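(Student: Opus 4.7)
The plan is to exhibit a dense $G_\d$ subset of $E$ inside $(0,1]$ and invoke Baire's theorem; since $\{0\}$ is nowhere dense, this yields residuality of $E$ in $[0,1]$. Theorem~\ref{closed interval} gives $\overline{D}(x) \leq 1+\l(\b)$ for all $x$, so one may rewrite
\[
E = \bigcap_{k=1}^\infty A_{1/k}, \qquad A_\e := \big\{x \in (0,1] : \overline{D}(x) \geq 1 + \l(\b) - \e\big\}.
\]
As a countable intersection of residual sets is residual, it suffices to show each $A_\e$ is residual. Unpacking the $\limsup$, one has $A_\e \supset \bigcap_{N \geq 1} W_{N,\e}$, where $W_{N,\e}$ is the interior of $\bigcup_{n \geq N}\big\{x : |I_n(x)| < \b^{-n(1+\l(\b)-\e)}\big\}$. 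Because $|I_n(\cdot)|$ is constant on each basic interval of order $n$, the bracketed set is a finite union of basic intervals of order $n$, and passing to the interior deletes only countably many right endpoints, so each $W_{N,\e}$ is open. It thus suffices to show each $W_{N,\e}$ is dense in $(0,1]$.

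Fix an open interval $J \subset (0,1]$ and parameters $N, \e > 0$. By the standard density of full cylinders for $\b$-expansions, pick a full cylinder $I(\e_1,\ldots,\e_{m_0}) \subset J$. Since $\l(\b) = \limsup_n \G_n/n$, choose $n^*$ as large as needed with $\G_{n^*}/n^* > \l(\b) - \e/2$ and let $k' \leq n^*$ attain this maximum, so that $t_{k'} > n^*(\l(\b)-\e/2)$. Fullness of the base cylinder lets me append the admissible word $(\e_1^*,\ldots,\e_{k'}^*)$ to form a nonempty sub-interval $I' := I(\e_1,\ldots,\e_{m_0},\e_1^*,\ldots,\e_{k'}^*) \subset J$ of order $m := m_0 + k'$. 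Since the last $k'$ digits of $I'$ match $(\e_1^*,\ldots,\e_{k'}^*)$, Theorem~\ref{th1.1} forces $|I'| \leq T_\b^{k'}(1)/\b^m$, and the vanishing of $\e_{k'+1}^*,\ldots,\e_{k'+t_{k'}}^*$ gives $T_\b^{k'}(1) \leq C_\b \b^{-t_{k'}}$ for a constant $C_\b$ depending only on $\b$. Therefore
\[
\frac{-\log_\b |I'|}{m} \;\geq\; 1 + \frac{t_{k'}}{m_0 + k'} - \frac{\log_\b C_\b}{m} \;\geq\; 1 + \frac{n^*(\l(\b)-\e/2)}{m_0 + n^*} - \frac{\log_\b C_\b}{m},
\]
which exceeds $1 + \l(\b) - \e$ once $n^*$ is large enough (also ensuring $m \geq N$). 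Thus $|I'| < \b^{-m(1+\l(\b)-\e)}$, and the interior of $I'$ is a nonempty open subset of $W_{N,\e} \cap J$, proving density.

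The heart of the proof is this construction: a full cylinder anchors us inside $J$, and the appended prefix of the $\b$-expansion of $1$ exploits a long run of zeros in $\e^*$ to shrink the cylinder's length close to $\b^{-m(1+\l(\b))}$. The main obstacle I anticipate is the combinatorial bookkeeping inside Theorem~\ref{th1.1}: one needs the implication ``last $k'$ digits match $\e_1^*,\ldots,\e_{k'}^*$ implies $|I_m| \leq T_\b^{k'}(1)/\b^m$,'' together with the admissibility of the concatenation (which rests on the full cylinder property and Parry's admissibility criterion). Once these combinatorial inputs are in hand, Baire's theorem completes the argument, and the corollary for $1 < \d < 1+\l(\b)$ is immediate from the disjointness $E \cap E_\d = \emptyset$.
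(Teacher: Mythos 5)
Your proof is correct, and the final step for the corollary itself coincides with the paper's: $E_\d\subset[0,1]\setminus E$ because $E\cap E_\d=\emptyset$, so everything reduces to the residuality of $E$ (Theorem \ref{r1}). Where you genuinely diverge is in how you prove that residuality. The paper constructs a single explicit dense $G_\d$ set $U\subset E$ as $\bigcap_n\bigcup_{k\ge n}\bigcup_{\e\in\Sigma_\b^k}{\rm int}\,I(\e,0^{\G_k+1},\w_1,\ldots,\w_k)$, which requires choosing a rapidly growing sequence $m_k$, the nested-closure lemma (Lemma \ref{in}) to guarantee nonempty intersections, and a separate computation (Lemma \ref{le3}) showing every point of $U$ realizes $\overline D(x)=1+\l(\b)$. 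You instead write $E=\bigcap_k A_{1/k}$ directly from $\overline D\le 1+\l(\b)$, reduce to density of the open sets $W_{N,\e}$, and get density by a one-step insertion: a full cylinder inside $J$ followed by $(\e_1^\ast,\ldots,\e_{k'}^\ast)$ with $t_{k'}=\G_{n^\ast}$ large. This is shorter and avoids both the bookkeeping of $(m_k)$ and Lemma \ref{in}, at the cost of being less constructive (you never exhibit points of $E$, only a residual subset of each $A_\e$); both arguments rest on the same inputs, namely fullness of $I(\e_1,\ldots,\e_n,0^{\G_n+1})$ and the length estimate (\ref{1.6}).

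Two small points worth tightening. First, your bound $|I'|\le T_\b^{k'}(1)/\b^{m}$ does not follow from Theorem \ref{th1.1} alone: the index $k_m^\ast$ there could be strictly less than $m_0$, in which case Theorem \ref{th1.1} produces $\b^{-k_m^\ast}|I(\e_1^\ast,\ldots,\e_{m-k_m^\ast}^\ast)|$ with an exponent $t_{m-k_m^\ast}$ that need not dominate $t_{k'}$. The clean route is Theorem \ref{lem3.2}(3) together with (\ref{1.6}): fullness of $I(\e_1,\ldots,\e_{m_0})$ gives $|I'|=\b^{-m_0}|I(\e_1^\ast,\ldots,\e_{k'}^\ast)|\le\b^{-(m+t_{k'})}$, which even removes the constant $C_\b$ from your display. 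Second, you should note that $k'\to\infty$ as $n^\ast\to\infty$ (since $\max_{k\le K}t_k$ is finite for fixed $K$, while $t_{k'}=\G_{n^\ast}\to\infty$), which is what guarantees $m\ge N$ can be arranged. Neither issue affects the validity of the argument.
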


Furthermore, Theorem \ref{r1} implies that $\overline{E}=[0,1]$ where $\overline{E}$ denote the closure of $E$, so $\dim_{\rm{B}}E=\dim_{\rm{B}}\overline{E}=1$. Hence, we easily get the following corollary which implies that $0=\dim_{\rm{H}}E < \dim_{\rm{B}}E=1$.
\begin{corollary}\label{r3}
If $\l(\b)>0$, then $\dim_{\rm{B}}E=1.$
\end{corollary}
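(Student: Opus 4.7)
The plan is to exploit Theorem \ref{r1} together with the standard fact that the (upper and lower) box-counting dimension is invariant under taking closures. Concretely, once we know that $E$ is residual in $[0,1]$, the Baire category theorem forces $E$ to be dense: if some nonempty open subinterval $U\subset[0,1]$ were disjoint from $E$, then $U$ would be contained in the first-category set $[0,1]\setminus E$, contradicting the fact that the complete metric space $[0,1]$ (and hence any nonempty open subset of it) is of the second category.

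From density we immediately obtain $\overline{E}=[0,1]$. Then I would invoke the elementary property $\dim_{\mathrm B}E=\dim_{\mathrm B}\overline{E}$, which is true for both upper and lower box dimension because the minimal number of $\varepsilon$-balls needed to cover $E$ is the same as the number needed to cover $\overline{E}$ (one can inflate any finite $\varepsilon$-cover of $E$ slightly to cover $\overline{E}$, or simply note that closures do not change the counting of $\varepsilon$-mesh cubes that meet the set). Applying this to $A=E$ gives $\dim_{\mathrm B}E=\dim_{\mathrm B}[0,1]=1$.

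There is essentially no obstacle here; the corollary is a two-line consequence of Theorem \ref{r1} plus the closure-invariance of box dimension, and the only thing worth writing carefully is the passage from ``residual'' to ``dense'' via Baire's theorem, together with an explicit citation of the closure-invariance property.
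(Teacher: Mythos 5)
Your proposal is correct and follows exactly the route the paper takes: Theorem \ref{r1} gives that $E$ is residual, hence dense, so $\overline{E}=[0,1]$, and the closure-invariance of box dimension yields $\dim_{\rm B}E=\dim_{\rm B}\overline{E}=1$. The only difference is that you spell out the (standard) Baire-category step from ``residual'' to ``dense,'' which the paper leaves implicit.
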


Let $D$ be an \emph{irregular set} which contains the points of $x\in(0,1]$ whose limit of $\frac{-\log_\b | I_n(x)| }{n}$ does not exists, i.e.,
$$D = \{x \in (0,1]:\underline{D}(x) < \overline{D}(x)\}.$$  The set of irregular points is negligible from the measure-theoretical point of view \cite{TD}. For every $ 1 < \d \leq 1 + \l(\b)$, we have $E_\d \subset D$, so $\dim_{\rm H}D \geq \dim_{\rm H} E_{\d} = \frac{\l(\b)+1-\d}{\d \cdot \l(\b)}\rightarrow 1$ as $\d \rightarrow 1$ (\ref{th1.3}) which implies that $$\dim_{\rm H}D =\dim_{\rm P}D=\dim_{\rm B}D=1,$$ where $\dim_{\rm P}$ and $\dim_{\rm B}$ denote the packing and boxing dimension respectively, see \cite{FE} for more details. Thus, we obtain that the set $D$ has full dimension if $\l(\b)>0$, i.e.,\ the set $D$ can be large from the viewpoint of dimension theory. The next result shows that $D$ is large from a topological point of view as well which follows immediately from $E\subset D$.
\begin{corollary}
Let $\b>1$ with $\l(\b)>0$, then $D$ is residual, therefore $D$ is of second category.
\end{corollary}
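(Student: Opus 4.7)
The plan is to deduce this corollary directly from Theorem~\ref{r1} together with the already established chain of inclusions relating the extremely irregular set $E$ to the irregular set $D$. First I would verify the inclusion $E \subset D$: by definition $E = E_{1+\l(\b)} = \{x \in (0,1] : \overline{D}(x) = 1+\l(\b)\}$, and since $\l(\b)>0$ is assumed, every $x \in E$ satisfies $\overline{D}(x) = 1+\l(\b) > 1$. Combining this with the universal lower bound $\underline{D}(x)=1$ from (\ref{up}), one has $\underline{D}(x) < \overline{D}(x)$ for all $x \in E$, so $E \subset D$.

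Next, I would invoke the elementary topological fact that any set containing a residual set is itself residual: if $E$ is residual in $[0,1]$ then $[0,1]\setminus E$ is of the first category, and since $[0,1]\setminus D \subset [0,1]\setminus E$, the complement of $D$ is also of the first category. Theorem~\ref{r1} provides exactly the hypothesis that $E$ is residual, so $D$ is residual.

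Finally, the assertion that $D$ is of the second category follows from the Baire category theorem applied to the complete metric space $[0,1]$: a residual subset of a Baire space cannot be of the first category, hence is of the second category. There is no real obstacle here; the whole content of the corollary is packaged into Theorem~\ref{r1}, and the role of the present statement is simply to record the topological consequence for the irregular set $D$ via the inclusion $E \subset D$.
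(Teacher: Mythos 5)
Your proof is correct and follows exactly the route the paper intends: the corollary is stated as an immediate consequence of the inclusion $E \subset D$ (which you rightly justify via $\overline{D}(x)=1+\l(\b)>1=\underline{D}(x)$ from (\ref{up})) together with Theorem \ref{r1} and the fact that a superset of a residual set is residual. The appeal to the Baire category theorem for the second-category claim matches the paper's reasoning as well.
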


In fact, there exist some irregular sets with zero measure, but residual, which indicates that such sets can be large in the topological sense. For example, the sets of some kinds of irregular points associated with integer expansion are shown to be residual \cite{PT,H,O}. Baek and Olsen \cite{BO} proved the set of extremely non-normal points of self-similar set is of residue. Madritsch \cite{M} extended and generalized these results to Markov partitions. Also, Madritsch and Petrykiewicz\cite{MI} showed that the non-normal numbers in dynamical system fulfilling the specification property are residual. However, in the study of non-normal number, the frequencies of digits and blocks were investigated. In this paper, the upper density $\overline{D}(x)$ cannot be expressed as some frequencies, and thus is a new object of research.

\section{Preliminaries}
In this section, we will recall some basic facts of $\b$-expansions and fix some notation. For more properties of
$\b$-expansions see \cite{B,HF,WP} and references therein.

The typical $\b$-transformation is given by $$T(x):=\b x - \lfloor\b x\rfloor, 0 \leq x < 1, $$where $\lfloor x \rfloor$ denotes the largest integer which is less than or equal to $x$. The transformation $T_{\b}$ adopted in this paper in order to ensure that every $x \in (0, 1]$ has an infinite series expansion, i.e.,\ $\e_n(x, \b) \neq 0$ for infinitely many $n \in \N$. This is because $T_{\beta}(x)$ is strictly larger than $0$. As a mater of fact, $\b$-expansions under the above two transformation coincide except at the points with a finite expansion under the algorithm
$T$.

From the definition of $T_\b$, it is clear that, for an integer $n \geq 1$, the $n$-th digit $\e_n(x,\b)$ of $x$ belongs to the
alphabet $\A=\{0,\ldots,\lceil\b\rceil-1\}$. What we should note here is that not all sequences $\e \in \A^\N$ are the $\b$-expansion of some $x\in(0,1]$. This leads to the notation of \emph{$\b$-admissible sequence}.

A word $(\e_1,\ldots,\e_n)$ is said to be \emph{admissible} with respect to the base $\beta$ if there
exists an $x \in (0, 1]$ such that the $\beta$-expansion of $x$ satisfies $\e_1(x,\beta)=\e_1,\ldots,\e_n(x,\beta)= \e_n$. An infinite digit sequence $(\e_1,\ldots,\e_n,\ldots)$ is called admissible if there exists an $x \in (0, 1]$ has the $\b$-expansion as $(\e_1,\ldots,\e_n,\ldots)$.

Let $\S_\b^n$ denote the family of all $\b$-admissible words with length $n$, i.e.,\ $$\S_\b^n=\{(\e_1,\ldots,\e_n)\in \A^n:
\exists\ x \in (0,1], {\rm such\ that\ }\e_j(x,\b)=\e_j, \forall\ 1
\leq j \leq n\}.$$ Let $\S_\b^\ast$ be the family  of all $\b$-admissible words with finite length, i.e.,\
$$\S_\b^\ast=\bigcup_{n=0}^\infty \S_\b^n.$$Let $\S_\b$ be the family  of all infinite $\b$-admissible sequences,
i.e.,\ $$\S_\b=\{(\e_1,\e_2,\ldots)\in \A^\N: \exists\ x \in (0,1],
{\rm such\ that\ }\e_j(x,\b)=\e_j, \forall\ j \geq 1\}.$$

We endow the space $\A^\N$ with the \emph{lexicographical order $<_{\rm{lex}}$} as
follows: $$(\varepsilon_1, \varepsilon_2,\ldots)<_{\rm{lex}}(\varepsilon'_1, \varepsilon'_2,\ldots)$$if there exists an integer $k \geq 1$ such that, for all $1 \leq j < k$, $\varepsilon_j=\varepsilon'_j$ but $\varepsilon_k<\varepsilon'_k$. The symbol $\leq_{\rm{lex}}$ means $=$ or $<_{\rm{lex}}$.

A characterization of the admissibility of a sequence which relies heavily on the $\beta$-expansion of $1$ is given by Parry \cite{WP} as the following theorem.
\begin{theorem}[Parry\cite{WP}]\label{P}
Fix $\beta > 1$, for every $n\geq 1$,$$(\e_1,\ldots,\e_n)\in \Sigma_\beta^n \Longleftrightarrow \sigma ^i\omega \leq_{\rm{lex}} (\e_1^\ast,\ldots,\e_{n-i}^\ast)\ for\ all\ i \geq 1,$$ where $\sigma$ is the shift operator such that $\sigma\omega=(\omega_2,\omega_3,\ldots).$
\end{theorem}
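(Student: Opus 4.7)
The statement is Parry's classical characterization, and I would run the usual two-direction argument based on one auxiliary lemma: for every $y\in(0,1]$, every shift of its $\b$-expansion is lexicographically dominated by the $\b$-expansion of $1$, i.e.\ $\sigma^k\e(y,\b)\leq_{\rm lex}(\e_1^\ast,\e_2^\ast,\ldots)$ for all $k\geq 0$. I would prove this lemma first, by contradiction: if some shift strictly exceeded $(\e_1^\ast,\e_2^\ast,\ldots)$ at the first coordinate of disagreement, summing the tail of the series would force $T_\b^k y>1$, violating $T_\b^k y\in(0,1]$.

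\textbf{Forward direction ($\Rightarrow$).} Let $x\in(0,1]$ witness the admissibility, so $\e_j(x,\b)=\e_j$ for $1\leq j\leq n$. For each $i\geq 1$ set $y_i=T_\b^i x\in(0,1]$; since $T_\b$ is semi-conjugate to the shift on digit sequences, the first $n-i$ digits of $\e(y_i,\b)$ are exactly $(\e_{i+1},\ldots,\e_n)$. Applying the auxiliary lemma with $k=0$ to $y_i$ and truncating at position $n-i$ yields
$$(\e_{i+1},\ldots,\e_n)\leq_{\rm lex}(\e_1^\ast,\ldots,\e_{n-i}^\ast),$$
which is the required inequality.

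\textbf{Backward direction ($\Leftarrow$).} Assuming the lex conditions hold, I would exhibit a concrete witness by gluing the prefix to the expansion of $1$: take
$$x=\sum_{j=1}^{n}\frac{\e_j}{\b^j}+\frac{1}{\b^n}\in(0,1],$$
so that the putative digit sequence of $x$ is the glued sequence $(\e_1,\ldots,\e_n,\e_1^\ast,\e_2^\ast,\ldots)$. An induction on $j$ then confirms $\e_j(x,\b)=\e_j$ for $1\leq j\leq n$: at step $j$ one must verify $T_\b^{j-1}x\in(\e_j/\b,(\e_j+1)/\b]$, the upper bound of which unpacks into a lex inequality on the shift at position $j-1$ of the glued sequence, supplied for $1\leq j\leq n$ by the hypothesis and for $j>n$ by the auxiliary lemma applied to $y=1$.

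\textbf{Main obstacle.} The delicate step is showing that the glued sequence $(\e_1,\ldots,\e_n,\e_1^\ast,\e_2^\ast,\ldots)$ is genuinely admissible: one has to combine the prefix hypothesis with the self-dominance of $\e(1,\b)$ at the "seam" between positions $n$ and $n+1$, and to handle the boundary cases cleanly, particularly when $\e(1,\b)$ is eventually periodic and when the equality case occurs in $\sigma^i\omega\leq_{\rm lex}(\e_1^\ast,\ldots,\e_{n-i}^\ast)$. Keeping the strict-versus-weak inequalities straight in this boundary analysis is the technical crux.
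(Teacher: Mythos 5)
First, a point of comparison: the paper does not prove Theorem \ref{P} at all --- it is quoted from Parry \cite{WP} as a known criterion --- so there is no in-paper argument to measure yours against, and your proposal must stand on its own. Your auxiliary lemma and the forward direction do stand up: the contradiction $T_\b^k y>1$ comes out strictly once one invokes the paper's convention that every expansion has infinitely many nonzero digits (so the tail of the series is positive), and truncating $\sigma^i\e(x,\b)\leq_{\rm lex}\e(1,\b)$ to length $n-i$ is harmless.

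The backward direction, however, has a genuine gap: the glued sequence $(\e_1,\ldots,\e_n,\e_1^\ast,\e_2^\ast,\ldots)$ is in general \emph{not} admissible, and your witness $x=\sum_{j=1}^n\e_j\b^{-j}+\b^{-n}$ need not even lie in $(0,1]$. Concretely, for $\b=(1+\sqrt5)/2$ one has $\e(1,\b)=(1,0,1,0,\ldots)$; take $n=1$ and $\e_1=1=\e_1^\ast$, which satisfies the hypothesis (vacuously for $i\geq1$, and with equality for $i=0$). Then $x=2/\b\approx1.236>1$, and the glued sequence $(1,1,0,1,0,\ldots)$ already violates admissibility at the seam, since $(1,1,\ldots)>_{\rm lex}(1,0,\ldots)=\e(1,\b)$. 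The failure is structural, not a boundary nuisance: whenever a suffix of $(\e_1,\ldots,\e_n)$ coincides with a prefix $(\e_1^\ast,\ldots,\e_j^\ast)$, admissibility of the glued sequence at that shift would force $\e(1,\b)\leq_{\rm lex}\sigma^j\e(1,\b)$, which is the reverse of the true domination $\sigma^j\e(1,\b)\leq_{\rm lex}\e(1,\b)$. So the ``main obstacle'' you flag is not merely delicate --- it is exactly where your construction breaks. The standard repair, which is also the device the paper itself leans on elsewhere (Theorem \ref{lem3.2}(4) and Lemma \ref{in}), is to insert a buffer of zeros before continuing, i.e.\ to realize $(\e_1,\ldots,\e_n)$ as a prefix of something like $(\e_1,\ldots,\e_n,0^{\G_n+1},\e_1^\ast,\e_2^\ast,\ldots)$; of course the admissibility of that padded sequence must then be established by a direct telescoping/induction argument rather than by citing results that themselves presuppose Parry's criterion.
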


Now we give a simple fact on the basic intervals, readers can refer to \cite{AB} for more details.

\begin{lemma}\label{i}
Let $\e =(\e_1,\ldots,\e_n)\in \S_\b^n$ with $n\geq 1$. We have $I(\e_1,\ldots,\e_n)$ is a left-open and right-closed interval with $\frac{\e_1}{\b}+\cdots+\frac{\e_n}{\b^n}$ as its left endpoint.
\end{lemma}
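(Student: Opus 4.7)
The plan rests on two properties of the digit map $\Phi:(0,1]\to\S_\b$, $\Phi(y):=\e(y,\b)$: it is non-decreasing with respect to the lexicographic order on $\A^\N$ and left-continuous with respect to the product topology. I would prove both by induction on the digit index. The base case is immediate: $\e_1(y,\b)=\lceil\b y\rceil-1$ is non-decreasing in $y$ and constant on each half-open interval $((k-1)/\b,k/\b]$, so it is non-decreasing and left-continuous. On each such level set of $\e_1$, the map $T_\b y=\b y-\e_1(y,\b)$ is an increasing affine bijection onto $(0,1]$, which transports the two properties from $\e_n$ on $T_\b y$ to $\e_{n+1}$ on $y$.

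Granted these properties, the interval structure follows quickly. Lex-monotonicity forces $I(\e_1,\ldots,\e_n)=\Phi^{-1}(\{\omega\in\S_\b:\omega_j=\e_j,\,1\le j\le n\})$, the preimage of an order-convex cylinder under a non-decreasing map, to be order-convex, hence an interval in $(0,1]$. Left-continuity ensures that its supremum $r$ belongs to $I(\e_1,\ldots,\e_n)$: if $y_k\nearrow r$ with $y_k\in I$, then $\Phi(y_k)\to\Phi(r)$ coordinatewise, so $\e_j(r,\b)=\e_j$ for $1\le j\le n$. Hence $I(\e_1,\ldots,\e_n)$ is right-closed. The identity $y=L+\b^{-n}T_\b^n y$ with $L:=\sum_{j=1}^n\e_j\b^{-j}$ and $T_\b^n y\in(0,1]$ then forces $L<y\le L+\b^{-n}$ for every $y\in I$, so $L$ is a strict lower bound.

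To identify $L$ as the infimum I would exhibit the points $L+\b^{-m}$ for large $m$: each has digit sequence $(\e_1,\ldots,\e_n,0,\ldots,0,1,0,0,\ldots)$, with a single $1$ at position $m$, provided this sequence is admissible. Parry's Theorem \ref{P} reduces admissibility to checking that, for each $0\le i<n$, the shifted sequence $(\e_{i+1},\ldots,\e_n,0,\ldots,0,1,0,\ldots)$ is $\le_{\rm lex}\e^\ast$: when $(\e_{i+1},\ldots,\e_n)<_{\rm lex}(\e_1^\ast,\ldots,\e_{n-i}^\ast)$ this is automatic, while in the equality case it suffices to take $m>n+p_i$, where $p_i$ is the position of the first nonzero entry of $\sigma^{n-i}\e^\ast$ (which exists because $\e^\ast$ has infinitely many nonzero terms under the $T_\b$ convention). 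Then $L+\b^{-m}\in I(\e_1,\ldots,\e_n)$ for such $m$ and $L+\b^{-m}\to L$, so $L=\inf I$ and $I$ is left-open at $L$.

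The main obstacle is this last admissibility verification, which requires splitting into cases according to whether suffixes of $(\e_1,\ldots,\e_n)$ match prefixes of $\e^\ast$; the rest of the argument is quite mechanical once the monotonicity and left-continuity of $\Phi$ are in hand.
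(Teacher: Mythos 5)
The paper itself does not prove this lemma; it simply points the reader to Fan and Wang \cite{AB}. So your argument has to stand on its own, and its overall architecture is sound: lexicographic monotonicity and left-continuity of the digit map do give order-convexity and right-closedness, and the identity $y=L+\b^{-n}T_\b^n y$ with $T_\b^n y\in(0,1]$ correctly shows $L$ is a strict lower bound with $I\subset(L,L+\b^{-n}]$. The inductive transfer of the two properties of $\e_1$ through the affine branches of $T_\b$ is also fine.

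There is, however, one genuine error in the last step, precisely at the point you yourself flag as the main obstacle. Under the paper's convention the expansion is generated by $T_\b x=\b x-\lceil\b x\rceil+1$ on $(0,1]$, so $\e_n(x,\b)\neq 0$ for infinitely many $n$; consequently \emph{no} point of $(0,1]$ has an eventually-zero digit sequence, and your witness $L+\b^{-m}$ cannot have the expansion $(\e_1,\ldots,\e_n,0,\ldots,0,1,0,0,\ldots)$. (You invoke this very convention two lines later to guarantee that $\sigma^{n-i}\e^\ast$ has a nonzero entry, so the two halves of your paragraph are inconsistent.) The repair is to use the actual $T_\b$-expansion of $L+\b^{-m}$, namely $(\e_1,\ldots,\e_n,0^{m-n},\e_1^\ast,\e_2^\ast,\ldots)$: since $\b^{-m}=\b^{-m}\sum_{k\ge1}\e_k^\ast\b^{-k}$, this sequence sums to $L+\b^{-m}$, and one checks (e.g.\ by running the algorithm: $T_\b^{m-1}(L+\b^{-m})=\b^{-1}$, $T_\b^{m}(L+\b^{-m})=1$) that it really is the expansion provided it is admissible. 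The admissibility verification is then essentially the one you describe, with the same thresholds $m>n+p_i$ in the equality cases, plus the trivial shifts $i\ge n$ where the word begins with $0<\e_1^\ast$. Two smaller points to tighten: Theorem \ref{P} as stated concerns finite words, so you either need the infinite-sequence version of Parry's criterion or the direct computation above to place the infinite sequence in $\S_\b$; and in the right-closedness step you should note that if no points of $I$ accumulate at $r=\sup I$ from the left then $r\in I$ automatically, so left-continuity is only needed in the accumulating case.
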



The notation of full intervals is vital to give the estimation of $|I_n(x)|$ in this paper, now we give the definition and state some simple facts on the full intervals. A basic interval $I(\e_1,\ldots, \e_n)$ is said to be \emph{full} if its length verifies $$|I(\e_1, \ldots, \e_n)| = \b^{-n}.$$

Fan and Wang  \cite{AB} gave serval characterizations and properties of full intervals as follow.
\begin{theorem}[\cite{AB}]\label{lem3.2}
Let $\e=(\e_1, \e_2,\ldots,\e_n)\in \S_\b^n$ with $n \geq 1$.\\
(1)The basic interval $I(\e_1,\ldots, \e_n)$ is a full interval if and only if for any $m \geq 1$ and any $\e'= (\e'_1,\ldots, \e'_m)\in \S_\b^m$, the concatenation $\e \ast \e'=(\e_1, \ldots,\e_n,\e'_1,\ldots, \e'_m)$
is admissible. \\
(2) If $(\e_1,\ldots,\e_{n-1},\e'_n)$ with $\e'_n \neq 0$ is admissible, then $I(\e_1,\ldots,\e_{n-1},\e_n)$ is full for any $0 \leq \e_n < \e'_n$.\\
(3) If $I(\e_1,\ldots,\e_n)$ is full, then for any $(\e'_1,\ldots,\e'_m)\in \S_\b^m$ , we have $$|I(\e_1,\ldots, \e_n,\e'_1,\ldots,\e'_m)| = |I(\e_1,\ldots,\e_n)| \cdot |I(\e'_1,\ldots,\e'_m)|.$$
(4)The basic intervals $I(\e_1,\ldots,\e_n,0^{\G_n+1})$ and $I(\e_1^\ast,\ldots,\e_n^\ast,0^{t_n+1})$ are full, where $0^\ell=\underbrace{0,\ldots,0}_\ell$.
\end{theorem}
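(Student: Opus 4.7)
The plan is to exploit two facts throughout: on each basic interval $I(\e_1,\ldots,\e_n)$ the map $T_\b^n$ acts as the affine bijection $x\mapsto \b^n x-\sum_{j=1}^n \b^{n-j}\e_j$ onto its image, so by Lemma \ref{i} we have $|I(\e_1,\ldots,\e_n)|=\b^{-n}|T_\b^n I(\e_1,\ldots,\e_n)|$; and Parry's criterion (Theorem \ref{P}) converts admissibility of a finite word into lexicographic comparisons with shifts of $\omega=(\e_1^\ast,\e_2^\ast,\ldots)$. Each of (1)--(4) will reduce to a combination of these two tools.

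For part (1), observe that $y\in T_\b^n I(\e_1,\ldots,\e_n)$ exactly when the $\b$-expansion of $y$ gives a legitimate continuation of $(\e_1,\ldots,\e_n)$. Hence fullness of the basic interval is equivalent to $T_\b^n I(\e_1,\ldots,\e_n)=(0,1]$, which is in turn equivalent to every admissible finite word being a legitimate continuation. Part (3) then follows immediately: under fullness, $T_\b^n$ is an affine bijection of slope $\b^n$ from $I(\e_1,\ldots,\e_n)$ onto $(0,1]$ that sends $I(\e_1,\ldots,\e_n,\e_1',\ldots,\e_m')$ onto $I(\e_1',\ldots,\e_m')$, so the lengths multiply.

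For part (2), by (1) it suffices to verify via Parry that any concatenation $\z=(\e_1,\ldots,\e_{n-1},\e_n,\eta_1,\ldots,\eta_m)$ with admissible tail is admissible. Shifts $\sigma^i\z$ with $i\geq n$ are handled by the admissibility of $(\eta_1,\ldots,\eta_m)$. For $1\leq i\leq n-1$, compare with $\sigma^i(\e_1,\ldots,\e_{n-1},\e_n')$: the first $n-i-1$ entries agree and $\e_n<\e_n'$ at position $n-i$, so $\sigma^i\z$ is strictly below $\sigma^i(\e_1,\ldots,\e_{n-1},\e_n')$ in the lexicographic order, and the latter is $\leq_{\rm{lex}}(\e_1^\ast,\ldots,\e_{n-i}^\ast)$ by the admissibility of $(\e_1,\ldots,\e_{n-1},\e_n')$.

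For part (4), the second assertion is an immediate application of (2): the prefix $(\e_1^\ast,\ldots,\e_n^\ast,0^{t_n},\e_{n+t_n+1}^\ast)$ of $\omega$ is admissible and, by the definition of $t_n$, its last digit satisfies $\e_{n+t_n+1}^\ast\geq 1$, so lowering this digit to $0$ in (2) yields fullness of $I(\e_1^\ast,\ldots,\e_n^\ast,0^{t_n+1})$. The first assertion I would prove by checking Parry's criterion directly for $(\e_1,\ldots,\e_n,0^{\G_n+1},\eta_1,\ldots,\eta_m)$: shifts falling inside the inserted zero block begin with a $0$ and are therefore strictly below $\omega$ since $\e_1^\ast\geq 1$; shifts starting in $(\eta_1,\ldots,\eta_m)$ are handled by its own admissibility; and for $1\leq i\leq n-1$, either $\sigma^i(\e_1,\ldots,\e_n)$ already falls strictly below $(\e_1^\ast,\ldots,\e_{n-i}^\ast)$, or it coincides with this prefix, in which case the run of forced zeros in $\omega$ after position $n-i$ has length $t_{n-i}\leq\G_n$ and is therefore exhausted before the inserted block of $\G_n+1$ zeros ends, producing a coordinate where our word carries a $0$ while $\omega$ carries the strictly positive digit $\e_{n-i+t_{n-i}+1}^\ast$. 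The main obstacle I expect is precisely this last bookkeeping, which is what forces the extra trailing zero (i.e.\ $0^{\G_n+1}$ rather than $0^{\G_n}$, and $0^{t_n+1}$ rather than $0^{t_n}$).
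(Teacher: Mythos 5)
First, a point of comparison: the paper contains no proof of this statement at all. Theorem \ref{lem3.2} is quoted from Fan and Wang \cite{AB}, so there is no internal argument to measure your proposal against; it has to be judged on its own merits and against the original source. On that basis your outline is essentially correct, and it is in substance the standard (Fan--Wang) argument: the identity $T_\beta^n x=\beta^n\bigl(x-\sum_{j=1}^n\epsilon_j\beta^{-j}\bigr)$ on $I(\epsilon_1,\ldots,\epsilon_n)$ shows $T_\beta^n I(\epsilon_1,\ldots,\epsilon_n)=(0,\beta^n|I(\epsilon_1,\ldots,\epsilon_n)|]$, so fullness means that $T_\beta^n$ maps the cylinder affinely onto $(0,1]$, which gives (1) and (3); Parry's criterion gives (2); and (4) follows, for the second interval, from (2) applied to the admissible prefix $(\epsilon_1^\ast,\ldots,\epsilon_n^\ast,0^{t_n},\epsilon_{n+t_n+1}^\ast)$ with $\epsilon_{n+t_n+1}^\ast\geq1$, and, for the first interval, from exactly the bookkeeping you describe: when $\sigma^i(\epsilon_1,\ldots,\epsilon_n)$ coincides with a prefix of the expansion of $1$, the forced run of zeros in that expansion has length $t_{n-i}\leq\Gamma_n$ and is exhausted inside the inserted block $0^{\Gamma_n+1}$, producing a strictly smaller digit. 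That is indeed the crux, and you have it right.

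The one place needing repair is the converse half of (1). You assert that $T_\beta^n I(\epsilon_1,\ldots,\epsilon_n)=(0,1]$ is \emph{equivalent} to every admissible finite word being a legitimate continuation, but the direction you actually invoke in (4) --- all concatenations admissible implies full --- is not immediate: for a given $y\in(0,1]$ you only know that each finite cylinder $I(\epsilon_1,\ldots,\epsilon_n,\epsilon_1(y),\ldots,\epsilon_m(y))$ is nonempty, and you must produce a single point $x\in I(\epsilon_1,\ldots,\epsilon_n)$ with $T_\beta^n x=y$; since cylinders are left-open, a nested intersection could a priori be empty. This can be filled either by a limit argument (the left endpoints increase to $x=\sum_{j\leq n}\epsilon_j\beta^{-j}+\beta^{-n}y$, and $x$ strictly exceeds each left endpoint because $T_\beta^m y>0$, so $x$ lies in every cylinder), or more cleanly by the contrapositive: if $|I(\epsilon_1,\ldots,\epsilon_n)|<\beta^{-n}$, then $T_\beta^n I(\epsilon_1,\ldots,\epsilon_n)=(0,c]$ with $c<1$, while the cylinders $I(\epsilon_1^\ast,\ldots,\epsilon_m^\ast)$ contain $1$ and have length at most $\beta^{-m}$, hence lie in $(c,1]$ for large $m$, so the concatenation $(\epsilon_1,\ldots,\epsilon_n,\epsilon_1^\ast,\ldots,\epsilon_m^\ast)$ is not admissible. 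A smaller quibble: Theorem \ref{P} as printed constrains only shifts $i\geq1$, whereas your handling of the shifts landing at the start of the appended word $(\eta_1,\ldots,\eta_m)$ uses the $i=0$ comparison $(\eta_1,\ldots,\eta_m)\leq_{\rm lex}(\epsilon_1^\ast,\ldots,\epsilon_m^\ast)$; that comparison is part of Parry's criterion in \cite{WP} (the paper's statement is slightly deficient here), so your argument is sound, but it does not follow from the quoted statement verbatim.
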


The following inequalities on the estimation of the lengths of basic intervals will be used which follow from Theorem \ref{lem3.2}(4)(see also \cite{LW}). For every admissible word $(\e_1,...,\e_n)$, we have
\begin{equation}\label{1.5}
\b^{-(n+\G_n + 1)} \leq |I(\e_1,\ldots,\e_n)| \leq \b^{-n},
\end{equation}
\begin{equation}\label{1.6}
\b^{-(n+t_n + 1)} \leq |I(\e_1^\ast,\ldots,\e_n^\ast)| \leq
\b^{-(n+t_n)}.
\end{equation}

Moreover, the following theorem in \cite{AB} gives a way to evaluate the length of an arbitrary basic interval $I(\e_1,\ldots,\e_n)$ by
comparing the suffixes of $(\e_1,\ldots,\e_n)$ with the prefixes of $\b$-expansion of the unit $1$.
\begin{theorem}[\cite{AB}]\label{th1.1}
Let $\e=(\e_1, \ldots,\e_n)\in \S_\b^n$ with $n \geq 1$.
Let$$k_n^\ast=k_n^\ast(\e) = \inf\{k \geq 0:(\e_{k+1},\ldots,\e_n)=(\e_1^\ast, \ldots, \e_{n-k}^\ast)\}.$$Then the length of $I(\e_1,\ldots,\e_n)$ satisfies $$|I(\e_1, \ldots,\e_n)| =\b^{-k_n^\ast}|I(\e_1^\ast,\ldots, \e_{n-k_n^\ast}^\ast)|.$$
\end{theorem}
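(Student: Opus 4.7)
The plan is to invoke Theorem \ref{lem3.2}(3). Writing $s = k_n^\ast$ and $m = n - s$, the defining equality $(\e_{s+1},\ldots,\e_n) = (\e_1^\ast,\ldots,\e_m^\ast)$ presents $(\e_1,\ldots,\e_n)$ as the concatenation $(\e_1,\ldots,\e_s) \ast (\e_1^\ast,\ldots,\e_m^\ast)$. Once I show that $I(\e_1,\ldots,\e_s)$ is full, Theorem \ref{lem3.2}(3) together with the definition of fullness yields
\[
|I(\e_1,\ldots,\e_n)| = |I(\e_1,\ldots,\e_s)|\cdot|I(\e_1^\ast,\ldots,\e_m^\ast)| = \b^{-s}\cdot|I(\e_1^\ast,\ldots,\e_m^\ast)|,
\]
which is the desired formula.

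To prove $I(\e_1,\ldots,\e_s)$ is full, I would verify the stronger statement that no proper suffix of $(\e_1,\ldots,\e_s)$ agrees with a prefix of $\e^\ast$; once this is known, admissibility combined with Parry's Theorem \ref{P} makes every $(\e_{k+1},\ldots,\e_s)$ strictly smaller than $(\e_1^\ast,\ldots,\e_{s-k}^\ast)$ in lex order, and Theorem \ref{lem3.2}(1) then gives fullness. I argue by contradiction: suppose $(\e_{k+1},\ldots,\e_s) = (\e_1^\ast,\ldots,\e_{s-k}^\ast)$ for some $0 \leq k < s$. Splicing this with the $k_n^\ast$-equality produces
\[
(\e_{k+1},\ldots,\e_n) = (\e_1^\ast,\ldots,\e_{s-k}^\ast,\e_1^\ast,\ldots,\e_m^\ast).
\]
Parry's Theorem applied to $(\e_1,\ldots,\e_n)$ gives $(\e_{k+1},\ldots,\e_n) \leq_{\rm{lex}} (\e_1^\ast,\ldots,\e_{n-k}^\ast)$, and cancelling the common prefix of length $s-k$ reduces this to $(\e_1^\ast,\ldots,\e_m^\ast) \leq_{\rm{lex}} (\e_{s-k+1}^\ast,\ldots,\e_{n-k}^\ast)$. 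Parry applied to $\e^\ast$ itself gives the shift inequality $\sigma^{s-k}\e^\ast \leq_{\rm{lex}} \e^\ast$, from which I will extract the reverse finite comparison $(\e_{s-k+1}^\ast,\ldots,\e_{n-k}^\ast) \leq_{\rm{lex}} (\e_1^\ast,\ldots,\e_m^\ast)$. The two opposing inequalities pin these $m$-blocks to an equality, so the concatenation collapses to $(\e_{k+1},\ldots,\e_n) = (\e_1^\ast,\ldots,\e_{n-k}^\ast)$, putting $k$ in the defining set of $k_n^\ast$ and contradicting $k < s$.

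The main subtlety is the shift comparison step, since $\sigma^{s-k}\e^\ast \leq_{\rm{lex}} \e^\ast$ is an inequality of infinite sequences whose truncation to the first $m$ coordinates could be $<_{\rm{lex}}$, equal, or even reversed. I would split on the position $j$ of the first disagreement between $\sigma^{s-k}\e^\ast$ and $\e^\ast$: if $j > m$ (or no disagreement at all) the two $m$-blocks coincide verbatim and the required equality is automatic, while if $j \leq m$ the finite restriction is strictly $<_{\rm{lex}}$, which flatly contradicts the admissibility inequality already in hand and rules that case out. Finally, the boundary case $s = n$ (empty matching suffix) is handled directly by the same "no proper matching suffix" argument applied to $(\e_1,\ldots,\e_n)$ itself, showing that $I(\e_1,\ldots,\e_n)$ is full with length $\b^{-n}$, which matches the stated formula under the natural convention $|I(\e_1^\ast,\ldots,\e_0^\ast)| = 1$.
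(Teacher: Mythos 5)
Your argument is correct, and a point of order first: the paper offers no proof of this statement at all --- it is quoted from Fan and Wang \cite{AB} as a known result --- so there is no in-paper argument to compare yours against, and I am judging the proposal on its own merits. The skeleton is right: writing $s=k_n^\ast$, $m=n-s$, decomposing $(\e_1,\ldots,\e_n)=(\e_1,\ldots,\e_s)\ast(\e_1^\ast,\ldots,\e_m^\ast)$, reducing everything to the fullness of $I(\e_1,\ldots,\e_s)$, and finishing with Theorem \ref{lem3.2}(3). Your minimality argument is also sound: if some proper suffix $(\e_{k+1},\ldots,\e_s)$ equalled $(\e_1^\ast,\ldots,\e_{s-k}^\ast)$, then the admissibility inequality for $(\e_1,\ldots,\e_n)$ at shift $k$ and the self-comparison $\sigma^{s-k}\e^\ast\leq_{\rm lex}\e^\ast$ give opposing lexicographic comparisons of the same two $m$-blocks, forcing them to coincide and putting $k<k_n^\ast$ into the defining set --- a contradiction; and you correctly handle the truncation subtlety (an inequality of infinite sequences restricts to $\leq_{\rm lex}$ on any finite prefix) as well as the degenerate cases $s=0$ and $s=n$. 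Two minor remarks. First, you invoke Parry's criterion at shift $0$ (in the $k=0$ branch of the contradiction and when checking that arbitrary admissible words can be appended), whereas Theorem \ref{P} as printed asserts the inequality only for $i\geq1$; the $i=0$ inequality is true and genuinely needed (the printed range is an off-by-one in the paper), so nothing breaks, but you are using the standard form of Parry's criterion rather than its literal statement here. Second, the passage from ``every suffix of $(\e_1,\ldots,\e_s)$ is \emph{strictly} below the corresponding prefix of $\e^\ast$'' to ``every concatenation with an admissible word is admissible,'' which is what Theorem \ref{lem3.2}(1) actually requires, deserves one explicit sentence: a strict disagreement occurring within the first $s-k$ letters persists under any extension of both words. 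This is routine, and with it the proof is complete.
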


\section{ Proof of Theorem \ref{closed interval}}
We will prove Theorem \ref{closed interval} in this section and before doing that, we first give a lemma on the upper density.

\begin{lemma}\label{sup}
Let $x \in (0,1]$. Then $\overline{D}(x)=1+\tau(x).$
\end{lemma}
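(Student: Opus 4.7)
The plan is to derive the identity $\overline{D}(x) = 1 + \tau(x)$ by directly combining Theorem \ref{th1.1} with inequality (\ref{1.6}), so that $|I_n(x)|$ is pinned down up to a factor of $\beta^{\pm 1}$ in terms of the quantity $t_{n-k_n^*(x)}$ whose normalized $\limsup$ is $\tau(x)$.

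First, I would apply Theorem \ref{th1.1} to the admissible word $(\epsilon_1(x,\beta),\ldots,\epsilon_n(x,\beta))$: this gives
$$|I_n(x)| = \beta^{-k_n^*(x)}\,|I(\epsilon_1^*,\ldots,\epsilon_{n-k_n^*(x)}^*)|.$$
Next, I would feed the length on the right into inequality (\ref{1.6}) with $m = n - k_n^*(x)$ (the case $m=0$, i.e.\ $k_n^*(x)=n$, is harmless: the ``interval'' $I(\emptyset)$ is $(0,1]$ of length $1$, and the formula trivializes; I would handle this edge case in one line). This produces the two-sided bound
$$\beta^{-(n + t_{n-k_n^*(x)} + 1)} \ \leq\ |I_n(x)|\ \leq\ \beta^{-(n + t_{n-k_n^*(x)})}.$$

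Then I would take $-\log_\beta$ and divide by $n$ to obtain
$$1 + \frac{t_{n-k_n^*(x)}}{n}\ \leq\ \frac{-\log_\beta |I_n(x)|}{n}\ \leq\ 1 + \frac{t_{n-k_n^*(x)}}{n} + \frac{1}{n}.$$
This shows $\bigl|\tfrac{-\log_\beta |I_n(x)|}{n} - 1 - \tfrac{t_{n-k_n^*(x)}}{n}\bigr| \leq 1/n$, so the two sequences differ by $o(1)$. Taking $\limsup$ and using the definitions of $\overline{D}(x)$ and $\tau(x)$ yields $\overline{D}(x) = 1 + \tau(x)$, as required.

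There is no genuine obstacle here: every ingredient (the expression of $|I_n(x)|$ via a prefix of $1$'s expansion, and the estimate on $|I(\epsilon_1^*,\ldots,\epsilon_m^*)|$) is already given. The only point to be careful about is that although $k_n^*(x)$ is not monotone and the index $n - k_n^*(x)$ can behave wildly, this does not matter for the argument: the inequality above compares the two sequences term-by-term, so the $\limsup$s automatically coincide. The degenerate case $k_n^*(x) = n$ also needs a one-line comment to keep the $m \geq 1$ hypothesis of (\ref{1.6}) satisfied.
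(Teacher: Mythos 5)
Your proposal is correct and follows essentially the same route as the paper's proof: both combine Theorem \ref{th1.1} with the two-sided estimate (\ref{1.6}) applied to $m=n-k_n^\ast(x)$. The only difference is organizational --- you use the termwise bound $\bigl|\tfrac{-\log_\beta |I_n(x)|}{n}-1-\tfrac{t_{n-k_n^\ast(x)}}{n}\bigr|\leq 1/n$ to equate the two $\limsup$s in one step, whereas the paper separately proves the upper bound for all $n$ and then exhibits a subsequence attaining $1+\tau(x)$; your version is slightly cleaner and equally valid.
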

\begin{proof}
On the one hand, by the definition of $k_n^\ast(x)$(\ref{kn}) and Theorem
\ref{th1.1}, we have
\begin{equation}\label{kn1}
|I_n(x)|=\b^{-k_n^\ast(x)}|I(\e_1^\ast,\ldots,\e_{n-k_n^\ast(x)}^\ast)|.
\end{equation}
It immediately follows from Theorem \ref{lem3.2}(4) that
\begin{equation}\label{kn2}
|I(\e_1^\ast,\ldots,\e_{n-k_n^\ast(x)}^\ast)| \geq
|I\big(\e_1^\ast,\ldots,\e_{n-k_n^\ast(x)}^\ast,0^{t_{n-k_n^\ast(x)}+1}\big)|=\b^{-\big(n-k_n^\ast(x)+t_{n-k_n^\ast(x)}+1\big)}.
\end{equation}

Combining (\ref{kn1}) and (\ref{kn2}), we get
\begin{equation}\label{1}
\overline D(x) \leq
\limsup\limits_{n\rightarrow\infty}\frac{n+t_{n-k_n^\ast(x)}+1}{n}
= 1+\tau(x).
\end{equation}

One the other hand, we need to find a sequence $\{n_i\}_{i\geq 1}$ satisfying $\overline{D}(x)=1+\tau(x).$ In fact, by the definition of $\tau(x)$, we can find a sequence $\{n_i\}_{i\geq 1}$ such that  $\tau(x)=\lim\limits_{i \rightarrow
\infty}\frac{t_{n_i-k_{n_i}^\ast(x)}}{n_i}$. In addition, (\ref{1.6}) gives that
\begin{equation}\label{kn3}
\b^{-\left(n_i-k_{n_i}^\ast(x)+t_{n_i-k_{n_i}^\ast(x)}+1\right)}
\leq |I(\e_1^\ast,\ldots\e_{n-k_n^\ast(x)}^\ast)| \leq
\b^{-\big(n_i-k_{n_i}^\ast(x)+t_{n_i-k_{n_i}^\ast(x)}\big)}.
\end{equation}

Consequently, applying (\ref{kn1}) and (\ref{kn3}), we deduce that
$$\frac{n_i+t_{n_i-k_{n_i}^\ast(x)}}{n_i} \leq \frac{-\log_\b|I_{n_i}(x)|}{n_i} \leq \frac{n_i+t_{n_i-k_{n_i}^\ast(x)}+1}{n_i},$$
that is,
\begin{equation}\label{2}
\lim_{i\rightarrow\infty}\frac{-\log_\b|I_{n_i}(x)|}{n_i}=1+\tau(x).
\end{equation}

Combination of (\ref{1}) and (\ref{2}) gives the desired result.
\end{proof}\\
\textbf{Proof of Theorem \ref{closed interval}}  We divide the proof into two cases by showing that $A(D(x))=\{1\}$ when $\tau(x)=0$ and $A(D(x)) = [1,1+\tau(x)]$ when $\tau(x)>0$.

Case \uppercase\expandafter{\romannumeral1}: $\tau(x)=0$. Note that $\underline{D}(x)=1$ by (\ref{up}) and
$\overline{D}(x)=1+\tau(x)$ by Lemma \ref{sup}, then
$\underline{D}(x)=\overline{D}(x)=1$. So $A(D(x))=\{1\}$.

Case \uppercase\expandafter{\romannumeral2}: $\tau(x)>0$. Since  $\underline{D}(x)=1$ and
$\overline{D}(x)=1+\tau(x)$, for any $1<a<1+\tau(x)$, we can choose an increasing sequence
$\{n_k\}_{k \geq 1}$ tending to $\infty$ as $k\rightarrow \infty$
such that
$$\frac{-\log_\b |I_{n_k+1}(x)|}{n_k+1} \leq a \leq \frac{-\log_\b
|I_{n_k}(x)|}{n_k}.$$  Noting that
$|I_{n_k}(x)| \geq |I_{n_k+1}(x)|$, we know that
$$\frac{-\log_\b |I_{n_k}(x)|}{n_k}\leq\frac{-\log_\b
|I_{n_k+1}(x)|}{n_k} = \frac{-\log_\b
|I_{n_k+1}(x)|}{n_k+1}\cdot\frac{n_k+1}{n_k}.$$ Therefore,
$$\frac{-\log_\b |I_{n_k+1}(x)|}{n_k+1} \leq a \leq \frac{-\log_\b
|I_{n_k+1}(x)|}{n_k+1}\cdot\frac{n_k+1}{n_k},$$ which implies
$\lim\limits_{k\rightarrow \infty}\frac{-\log_\b
|I_{n_k+1}(x)|}{n_k+1} = a$. Thus $[1,1+\tau(x)]\subset A(D(x))$.

Moreover, $\underline{D}(x)=1$ and $\overline{D}(x)=1+\tau(x)$ indicate that $A(D(x)) \subset [1,1+\tau(x)]$.

Therefore, $A(D(x)) = [1,1+\tau(x)]$.$\hfill\Box$

\section{Proof of Theorem \ref{r1}}

To prove  Theorem \ref{r1}, we first introduce some notation in symbolic space.

For  $\e = (\e_1,\e_2,...,\e_n) \in \A^n$ and a positive
integer $m$ with $m \leq n$, or for $\e = (\e_1,\e_2,...,\e_n,...)
\in \A^{\N}$ and a positive integer $m$, let $\e|_m =
(\e_1,\e_2,...,\e_m)$.

Recall that $(\e_1^\ast,\ldots,\e_n^\ast,\ldots)$ is the $\b$-expansion of $1$ and $t_n$ is defined as (\ref{1.3}), we give a property of the full intervals as the following lemma.
\begin{lemma}\label{in}
Let $k\geq 1$ be an integer. If $I(\e_1,\ldots,\e_n)$ is full, we have
$$\overline{I(\e_1,\ldots,\e_n,\e_1^\ast,\ldots,\e_k^\ast,0^{t_k+1})}\subset {\rm int}(I(\e_1,\ldots,\e_n))$$where
${\rm int}(I(\w))$ denotes the interior of $I(\w) \subset [0,1]$.
\end{lemma}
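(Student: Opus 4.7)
The plan is to compute the endpoints of $J := I(\e_1,\ldots,\e_n,\e_1^\ast,\ldots,\e_k^\ast,0^{t_k+1})$ and of $I(\e_1,\ldots,\e_n)$ explicitly, and then verify two strict inequalities that place $\overline{J}$ inside the open interior of $I(\e_1,\ldots,\e_n)$. By Lemma \ref{i}, $I(\e_1,\ldots,\e_n)$ has left endpoint $\ell := \sum_{i=1}^n \e_i/\beta^i$, and since it is full, its right endpoint is $\ell + \beta^{-n}$. Parts (3) and (4) of Theorem \ref{lem3.2} together imply that $J$ is also full with length $\beta^{-(n+k+t_k+1)}$. Hence its left endpoint equals $L_J := \ell + \beta^{-n}\sum_{i=1}^k \e_i^\ast/\beta^i$, and its right endpoint is $R_J := L_J + \beta^{-(n+k+t_k+1)}$. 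Since $J$ is closed on the right and open on the left, $\overline{J} = [L_J, R_J]$, so it suffices to prove $\ell < L_J$ and $R_J < \ell + \beta^{-n}$.

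The first inequality is immediate: because $\beta > 1$, the leading digit of the $\beta$-expansion of $1$ satisfies $\e_1^\ast = \lceil \beta \rceil - 1 \geq 1$, whence $L_J - \ell \geq \beta^{-(n+1)} > 0$. For the second, I would use the identity $1 = \sum_{i=1}^\infty \e_i^\ast/\beta^i$ together with $\e_{k+1}^\ast = \cdots = \e_{k+t_k}^\ast = 0$ (from the definition (\ref{1.3}) of $t_k$) to rewrite
$$\ell + \beta^{-n} - R_J \;=\; \beta^{-n}\Big(\sum_{i=k+t_k+1}^{\infty} \frac{\e_i^\ast}{\beta^i} \;-\; \frac{1}{\beta^{k+t_k+1}}\Big).$$
The maximality of $t_k$ forces $\e_{k+t_k+1}^\ast \geq 1$, which accounts for the subtracted term exactly.

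The main subtlety, and in my view the only non-routine step, is upgrading this to a strict inequality. The remaining tail $\sum_{i > k+t_k+1} \e_i^\ast/\beta^i$ must be strictly positive, and I would deduce this from the reason the authors insist on $T_\beta$ rather than $T$: since $T_\beta$ maps $(0,1]$ into $(0,1]$, one has $T_\beta^{\,n}(1) > 0$ for every $n \geq 0$, so the $\beta$-expansion of $1$ cannot terminate in zeros, i.e.\ $\e_j^\ast \geq 1$ for infinitely many $j$. Any such index $j > k+t_k+1$ contributes a strictly positive term to the tail and finishes the argument, giving $\overline{J} \subset (\ell, \ell + \beta^{-n}) = \mathrm{int}(I(\e_1,\ldots,\e_n))$.
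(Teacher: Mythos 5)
Your proof is correct, and it converges on the same two target inequalities as the paper: the left endpoint of the subinterval lies strictly above $\ell$, and the right endpoint lies strictly below $\ell+\b^{-n}$. The left-endpoint step (via $\e_1^\ast=\lceil\b\rceil-1\geq 1$) is identical to the paper's Case I. Where you genuinely diverge is the right endpoint. You compute $R_J$ from the fullness of $J$ (Theorem \ref{lem3.2}(3)--(4) give $|J|=\b^{-(n+k+t_k+1)}$, so $R_J=L_J+\b^{-(n+k+t_k+1)}$) and reduce the strict inequality $R_J<\ell+\b^{-n}$ to positivity of the tail $\sum_{i>k+t_k+1}\e_i^\ast\b^{-i}$, which you correctly justify by the non-termination of the $\b$-expansion of $1$ under $T_\b$ (equivalently, the tail equals $T_\b^{k+t_k+1}(1)/\b^{k+t_k+1}>0$). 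The paper avoids any tail estimate: it appends the digit $1$ after $0^{t_k}$, notes that $(\e_1,\ldots,\e_n,\e_1^\ast,\ldots,\e_k^\ast,0^{t_k},1)$ is admissible (by maximality of $t_k$ plus fullness of $I(\e_1,\ldots,\e_n)$), identifies the left endpoint $x_0$ of that interval with the right endpoint of $J$ via the identity $1=\sum_i\e_i^\ast\b^{-i}$, and then reuses the Case I reasoning (a nonempty subinterval of $I(\e_1,\ldots,\e_n)$ whose infimum exceeds $\ell$ has that infimum in the interior). Your route is more computational, needing the explicit length of $J$ and the infinitude of the expansion of $1$; the paper's is more combinatorial, needing only the admissibility of one extra word. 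Both are complete; yours makes the source of strictness on the right more transparent, while the paper's sidesteps the endpoint arithmetic entirely. (Minor point: for $J$ to be defined you should also note that the concatenated word is admissible, which follows from Theorem \ref{lem3.2}(1); this is implicit in your use of part (3).)
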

\begin{proof}
If $I(\e_1,\ldots,\e_n)$ is full, we get that $(\e_1,\ldots,\e_n)$ can concatenate any admissible word by Theorem \ref{lem3.2}(1). By the definition of $t_n$, we have $(\e_1,\ldots,\e_n,\e_1^\ast,\ldots,\e_k^\ast,0^{t_k+1})\in \S_\b^\ast$. Now we only need to show that the left and right endpoints of $I(\e_1,\ldots,\e_n,\e_1^\ast,\ldots,\e_k^\ast,0^{t_k+1})$ lie in ${\rm int}(I(\e_1,\ldots,\e_n))$ respectively.

Case \uppercase\expandafter{\romannumeral1}: Since $\e_1^\ast=\lceil\b \rceil-1 \geq 1$, we have $$\frac{\e_1}{\b}+\cdots+\frac{\e_n}{\b^n}+\frac{\e_1^\ast}{\b^{n+1}}+\cdots+\frac{\e_k^\ast}{\b^{n+k}} > \frac{\e_1}{\b}+\cdots+\frac{\e_n}{\b^n}.$$
This inequality and $I(\e_1,\ldots,\e_n,\e_1^\ast,\ldots,\e_k^\ast,0^{t_k+1})\subset I(\e_1,\ldots,\e_n) $ imply that the left endpoint of $I(\e_1,\ldots,\e_n,\e_1^\ast,\ldots,\e_k^\ast,0^{t_k+1})$ belongs to ${\rm int}(I(\e_1,\ldots,\e_n))$ by Lemma \ref{i}.

Case \uppercase\expandafter{\romannumeral2}: By the definition of $t_k$, we have $(\e_1,\ldots,\e_n,\e_1^\ast,\ldots,\e_k^\ast,0^{t_k},1)$ is admissible. Besides, the fullness of $I(\e_1,\ldots,\e_n)$ ensures that $(\e_1,\ldots,\e_n,\e_1^\ast,\ldots,\e_k^\ast,0^{t_k},1)\in \S_\b^\ast$ by Theorem \ref{lem3.2}(1). The same argument as Case \uppercase\expandafter{\romannumeral1} gives that the left endpoint of $I(\e_1,\ldots,\e_n,\e_1^\ast,\ldots,\e_k^\ast,0^{t_k},1)$ belongs to ${\rm int}(I(\e_1,\ldots,\e_n))$, that is,
$$x_0:=\frac{\e_1}{\b}+\cdots+\frac{\e_n}{\b^n}+\frac{\e_1^\ast}{\b^{n+1}}+\cdots+\frac{\e_k^\ast}{\b^{n+k}}+\frac{1}{\b^{n+k+t_k+1}} \in {\rm int}(I(\e_1,\ldots,\e_n)).$$

Now we prove that $x_0$ is the right endpoint of $I(\e_1,\ldots,\e_n,\e_1^\ast,\ldots,\e_k^\ast,0^{t_k+1})$. As a matter of fact, for every $x\in I(\e_1,\ldots,\e_n,\e_1^\ast,\ldots,\e_k^\ast,0^{t_k+1}) $, we easily get that
$$\begin{array}{rcl}
x & = & \frac{\e_1}{\b}+\cdots+\frac{\e_n}{\b^n}+\frac{\e_1^\ast}{\b^{n+1}}+\cdots+\frac{\e_k^\ast}{\b^{n+k}}+\frac{\e_{n+k+t_k+2}(x,\beta)}{\b^{n+k+t_k+2}} +\cdots\\
& \leq & \frac{\e_1}{\b}+\cdots+\frac{\e_n}{\b^n}+\frac{\e_1^\ast}{\b^{n+1}}+\cdots+\frac{\e_k^\ast}{\b^{n+k}}+\frac{1}{\b^{n+k+t_k+1}}=x_0,
\end{array}$$
Furthermore, recall that $1=\frac{\e_1^\ast}{\b}+\frac{\e_2^\ast}{\b^2}+\cdots$, we obtain that
$$\begin{array}{rcl}
x_0 & = & \frac{\e_1}{\b}+\cdots+\frac{\e_n}{\b^n}+\frac{\e_1^\ast}{\b^{n+1}}+\cdots+\frac{\e_k^\ast}{\b^{n+k}}+\frac{1}{\b^{n+k+t_k+1}} \\
& = & \frac{\e_1}{\b}+\cdots+\frac{\e_n}{\b^n}+\frac{\e_1^\ast}{\b^{n+1}}+\cdots+\frac{\e_k^\ast}{\b^{n+k}}+\frac{1}{\b^{n+k+t_k+1}}(\frac{\e_1^\ast}{\b}+\cdots+ \frac{\e_n^\ast}{\b^n}+\cdots)\\
&=& \frac{\e_1}{\b}+\cdots+\frac{\e_n}{\b^n}+\frac{\e_1^\ast}{\b^{n+1}}+\cdots+\frac{\e_k^\ast}{\b^{n+k}}+\frac{\e_1^\ast}{\b^{n+k+t_k+2}}+ \frac{\e_2^\ast}{\b^{n+k+t_k+3}}+\cdots \\
&\in & I(\e_1,\ldots,\e_n,\e_1^\ast,\ldots,\e_k^\ast,0^{t_k+1}).
\end{array}$$
The last relationship follows from the criterion of admissibility (Theorem \ref{P}), so $x_0$ is the right endpoint of the basic interval $I(\e_1,\ldots,\e_n,\e_1^\ast,\ldots,\e_k^\ast,0^{t_k+1})$ and it belongs to ${\rm int}(I(\e_1,\ldots,\e_n))$.
\end{proof}

The notion of residual set is usually used to describe a set being large in the topological sense. Recall that in a metric space $X$, a set $R$ is said to be \emph{residual} if its complement is of the first category. Moreover, in a complete metric space a set is residual if it contains a dense $G_\d$ set, see \cite{JC}. Hence, in order to prove Theorem\ref{r1}, it suffices to construct a set $U \subset [0,1]$ verifying the following three conditions:

(1) $U \subset E;$

(2) $U$ is dense in $[0,1]$;

(3) $U$ is a $G_\d$ set.

Now we devote to constructing a set $U$ with the desired properties. From \cite{AB}, we know that $\l(\b)$ can also be written as $$\l(\b) = \limsup_{n\rightarrow \infty}\frac{t_n}{n},$$where $t_n$ is defined as (\ref{1.3}). For each $k\geq 1$, recall that $\Sigma_{\beta}^k$ is the set of all admissible words of length $k$. Define $$U:=\bigcap_{n=1}^{\infty}\bigcup_{k=n}^{\infty}\bigcup_{(\e_1,\ldots,\e_k)\in \Sigma_{\beta}^k}\rm{int}\left(I(\e_1,\ldots,\e_k,0^{\Gamma_k+1},\omega_1,\ldots,\omega_k)\right),$$
where \begin{equation}\label{wk}
\omega_k=(\e^\ast_1,\ldots,\e^\ast_{m_k},0^{t_{m_k}+1}),
\end{equation} and $m_k$ is chosen to be a fast increasing sequence such that
\begin{equation}\label{lim}
\l(\b)=\lim_{k\rightarrow \infty}\frac{t_{m_k}}{m_k},\  \ \  \  \
k+\Gamma_k+1+\sum_{j=1}^{k-1}(m_j+t_{m_j}+1)\ll m_k.
\end{equation}

We can see that $U$ is well defined. This is because, for any $\e=(\e_1,\e_2,...,\e_k)\in \Sigma_{\beta}^k$, the interval $I(\e_1,\e_2,...,\e_k,0^{\G_k+1})$ is full by Theorem \ref{lem3.2}(4) and it follows from Theorem \ref{lem3.2}(1) that $(\e_1,\e_2,...,\e_k,0^{\G_k+1})$ can concatenate any $\b$-admissible word. Analogously, $I(\omega_k)$ is full by  Theorem \ref{lem3.2}(4) and $\w_k$ can concatenate any admissible word by Theorem \ref{lem3.2}(1) for each $k \geq 1$.

Clearly, $U$ is a $G_{\d}$ set since ${\rm int}(I(\e))$ is open. Next we will show that $U$ is a subset of $E$ and is dense in $[0,1]$.
\begin{lemma}\label{le2}
$U$ is dense in $[0,1]$.
\end{lemma}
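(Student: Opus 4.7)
The strategy is to express $U$ as a countable intersection of open sets and apply the Baire category theorem. Set
$$V_n := \bigcup_{k=n}^{\infty}\bigcup_{(\e_1,\ldots,\e_k)\in \Sigma_{\beta}^k}\text{int}\big(I(\e_1,\ldots,\e_k,0^{\Gamma_k+1},\omega_1,\ldots,\omega_k)\big),$$
so that $U=\bigcap_{n\geq 1}V_n$ and each $V_n$ is manifestly open as a union of open sets. Since $[0,1]$ is a complete metric space, it suffices to show that every $V_n$ is dense; Baire's theorem will then yield that $U$ is a dense $G_\delta$ set.

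To verify density of $V_n$, I would fix a non-empty open set $W\subset[0,1]$ and produce an explicit point of $V_n\cap W$. Given $y\in W$, the estimate (\ref{1.5}) gives $|I_M(y)|\leq\beta^{-M}$, so for $M$ large enough the basic interval $I_M(y)$ is contained in $W$. Writing $\delta=(\e_1(y),\ldots,\e_M(y))$, Theorem \ref{lem3.2}(4) guarantees that $I(\delta,0^{\Gamma_M+1})$ is a \emph{full} basic interval contained in $W$, and Theorem \ref{lem3.2}(3) together with Theorem \ref{lem3.2}(1) shows that appending additional zeros keeps the extended basic interval admissible and full. In particular, after further extension by zeros if necessary, I obtain a full basic interval $I(\gamma_1,\ldots,\gamma_k)\subset W$ with $k\geq n$.

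With such a full $(\gamma_1,\ldots,\gamma_k)$ in hand, Theorem \ref{lem3.2}(1) lets me freely concatenate with any admissible word; in particular the word $(\gamma_1,\ldots,\gamma_k,0^{\Gamma_k+1},\omega_1,\ldots,\omega_k)$ lies in $\Sigma_\beta^\ast$, and the corresponding basic interval is contained in $I(\gamma_1,\ldots,\gamma_k)\subset W$. Having positive length, its interior is a non-empty open subset of $W$ that appears in the $k$-th term of the union defining $V_n$. Hence $V_n\cap W\neq\emptyset$, which proves density, and Baire's theorem closes out the lemma.

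The only genuine step of substance is the first reduction: ensuring we can always embed a full basic interval of arbitrarily large prescribed order inside any given open set. This is handled by the standard ``append $0^{\Gamma_M+1}$'' trick from Theorem \ref{lem3.2}(4) followed by zero-extension via Theorem \ref{lem3.2}(3), so I do not anticipate any real obstruction.
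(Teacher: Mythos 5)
Your proof is correct, but it takes a genuinely different route from the paper's. The paper proves density by direct construction: starting from the first $\ell$ digits of an arbitrary $x$, it builds an explicit nested sequence of words $u_1, u_2, \ldots$ (each obtained from the previous by appending $0^{\Gamma+1}$ and a block of $\omega_j$'s) and shows $\bigcap_k \mathrm{int}(I(u_k)) \neq \emptyset$; the nontrivial point there is that open interiors of nested intervals need not have a common point, which is exactly why the paper needs Lemma \ref{in} (closure of each $I(u_{k+1})$ sits inside the interior of $I(u_k)$). You instead decompose $U = \bigcap_n V_n$ with each $V_n$ open, prove each $V_n$ is dense by planting a single full basic interval of order $k \geq n$ inside a given open set $W$ (your reduction via (\ref{1.5}) and Theorem \ref{lem3.2} is sound, and in fact you could skip the zero-padding entirely by just taking $M \geq n$ at the outset), and then invoke Baire. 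Your approach buys simplicity: it bypasses Lemma \ref{in} and the nested-interval bookkeeping entirely, and it delivers residuality of $U$ directly rather than via the ``dense $G_\delta$'' formulation used in the paper's proof of Theorem \ref{r1}. What the paper's construction buys in exchange is an explicit point of $U$ within $\beta^{-\ell}$ of any prescribed $x$, i.e., a concrete picture of how elements of $U$ (and hence of $E$) are manufactured. One small point of care in your write-up: the word $(\gamma_1,\ldots,\gamma_k,0^{\Gamma_k+1},\omega_1,\ldots,\omega_k)$ being admissible requires the inductive fullness argument (each $I(\omega_j)$ is full by Theorem \ref{lem3.2}(4), so the concatenation stays full by parts (1) and (3)); this is exactly the argument the paper gives when checking that $U$ is well defined, so you may simply cite that.
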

\begin{proof}
Given $x\in[0,1]$ and $r > 0$, we only need to find $y\in U$ such that $|x-y|\leq r.$ Let the $\b$-expansion of $x$
be $\e(x,\b)=(\e_1(x),\e_2(x),...)$, specially, if $x=0$, let $\e(x,\b)=(0,0,\ldots,0,\ldots)$. Clearly there exists $\ell \in \N$
such that $\b^{-\ell} \leq r$.

Let $\w_k$ be defined as (\ref{wk}). Take\\
$$\begin{array}{rcl}
u_1 & = & (\e_1(x),...,\e_\ell(x),0^{\Gamma_\ell+1},\w_1,...,\w_\ell)\in \Sigma^{\ell_1}_\beta, {\rm where}\ \ell_1= \ell+\Gamma_{\ell}+1+ \sum\limits_{j=1}^{\ell}(m_j+t_{m_j}+1), \\
u_2 & = & (u_1,0^{\Gamma_{\ell_1}+1},\w_1,...,\w_{\ell_1})\in \Sigma^{\ell_2}_\beta, {\rm where}\ \ell_2= {\ell_1}+\Gamma_{{\ell_1}}+1+ \sum\limits_{j=1}^{\ell_1}(m_j+t_{m_j}+1), \\
\vdots\\
u_k & = & (u_{k-1},0^{\Gamma_{\ell_{k-1}}+1},\w_1,...,\w_{\ell_{k-1}})\in \Sigma^{\ell_k}_\beta, {\rm where}\ \ell_k= {\ell_{k-1}}+\Gamma_{{\ell_{k-1}}}+1+ \sum\limits_{j=1}^{\ell_{k-1}}(m_j+t_{m_j}+1),\\
\vdots\\
\end{array}$$ Let $$S := \bigcap_{k=1}^\infty{\rm int}(I(u_k)).$$For each $k\geq 1$, we have$${\rm int}(I(u_{k+1}))\subset \overline{I(u_{k+1})} \subset {\rm int}\left(I(u_k,0^{\Gamma_{\ell_k}+1},\omega_1,\ldots,\omega_{\ell_k-1})\right)\subset  I(u_k)\subset \overline{I(u_k)},$$ where the second inclusion relation follows from Lemma \ref{in}. Thus, it follows that $$\bigcap_{k=1}^\infty{\rm int}(I(u_k))=\bigcap_{k=1}^\infty\overline{I(u_k)}.$$ Note that $\overline{I(u_{k+1})}\subset \overline{I(u_k)}$, it is obvious that  $\bigcap\limits_{k=1}^\infty\overline{I(u_k)}$ is nonempty. The intersection $\bigcap\limits_{k=1}^\infty{\rm int}(I(u_k))$ is therefore nonempty which gives the fact that $S \neq \emptyset$. For every $y \in S$,  we get $y \in U$ by the construction of $U$ and $S$. Moreover, we have
$$|x-y|\leq \b^{-\ell} \leq r$$ since both the $\beta$-expansions of $x$ and $y$ begin with $\e_1(x),...,\e_\ell(x)$. Therefore, $U$ is dense in $[0,1]$.
\end{proof}
\begin{lemma}\label{le3}
$U \subset E$.
\end{lemma}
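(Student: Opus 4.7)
The goal is to establish $\overline{D}(x)\ge 1+\l(\b)$ for every $x\in U$, since the reverse inequality is automatic from \eqref{up}. Fix $x\in U$. By construction, for every $n\ge 1$ there exist $k=k(n)\ge n$ and an admissible word $(\e_1,\ldots,\e_k)\in\S_\b^k$ (both depending on $x$) such that $x\in\mathrm{int}(I(\e_1,\ldots,\e_k,0^{\G_k+1},\w_1,\ldots,\w_k))$; in particular the first digits of the $\b$-expansion of $x$ agree with this prefix. The plan is to examine $|I_{P_k}(x)|$ at the intermediate truncation
\[
P_k:=k+\G_k+1+\sum_{j=1}^{k-1}(m_j+t_{m_j}+1)+m_k,
\]
that is, exactly at the end of the $(\e_1^\ast,\ldots,\e_{m_k}^\ast)$ block inside $\w_k$, just before its trailing $0^{t_{m_k}+1}$ suffix, and to prove the upper bound $|I_{P_k}(x)|\le\b^{-(P_k+t_{m_k})}$.

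To obtain this bound, I would appeal to Theorem \ref{lem3.2}(4) to note that both $I(\e_1,\ldots,\e_k,0^{\G_k+1})$ and each $I(\w_j)$ are full, and then iterate the multiplicativity of lengths from Theorem \ref{lem3.2}(3) to factor
\[
|I_{P_k}(x)|=\b^{-(k+\G_k+1)}\prod_{j=1}^{k-1}\b^{-(m_j+t_{m_j}+1)}\cdot|I(\e_1^\ast,\ldots,\e_{m_k}^\ast)|.
\]
The last factor is controlled by the upper estimate $|I(\e_1^\ast,\ldots,\e_{m_k}^\ast)|\le\b^{-(m_k+t_{m_k})}$ from \eqref{1.6}, which collapses the whole expression to $\b^{-(P_k+t_{m_k})}$ as claimed.

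Dividing through gives $-\log_\b|I_{P_k}(x)|/P_k\ge 1+t_{m_k}/P_k$. The rapid-growth hypothesis \eqref{lim} on $m_k$ ensures that $k+\G_k+1+\sum_{j=1}^{k-1}(m_j+t_{m_j}+1)\ll m_k$, so $P_k/m_k\to 1$, and combined with $t_{m_k}/m_k\to\l(\b)$ this yields $t_{m_k}/P_k\to\l(\b)$ as $k\to\infty$. Taking the limsup along the sequence of $k$'s produced by the membership $x\in U$ delivers $\overline{D}(x)\ge 1+\l(\b)$, and together with \eqref{up} we conclude $x\in E$. The main point to get right is the choice of truncation: stopping at $P_k$ rather than at the end of $\w_k$ is what allows the bound \eqref{1.6} to contribute the full $\b^{-t_{m_k}}$ factor, while \eqref{lim} is exactly what prevents this gain from being diluted by the lower-order prefix contributions in $P_k$.
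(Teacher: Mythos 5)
Your proof is correct and follows essentially the same route as the paper: you test $\overline{D}(x)$ along the same subsequence $n_k=P_k$ ending exactly at the $(\e_1^\ast,\ldots,\e_{m_k}^\ast)$ block, and you let the growth condition (\ref{lim}) turn the bound $t_{m_k}/P_k\to\l(\b)$ into $\overline{D}(x)\ge 1+\l(\b)$, with (\ref{up}) supplying the reverse inequality. The only difference is technical: you derive $|I_{P_k}(x)|\le\b^{-(P_k+t_{m_k})}$ by factoring the length over the full blocks via Theorem \ref{lem3.2}(3) and then applying (\ref{1.6}) to the last factor, whereas the paper notes that the $t_{m_k}$ digits following position $n_k$ are forced to be zero, so $I_{n_k}(x)=I_{n_k+t_{m_k}}(x)$, and uses the fullness of $I_{n_k+t_{m_k}+1}(x)$; both yield the same estimate.
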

\begin{proof}
For every $x \in U,$  we only need to prove that there exists a sequence $\{n_k\}_{k\geq 1}$ such that $$\lim_{k\rightarrow\infty} \frac{-\log_\beta |I_{n_k}|}{n_k}=1+\l(\b).$$ Then it follows that $\overline{D}(x)= 1+\l(\b)$ since $\overline{D}(x)\leq 1+\l(\b)$. As a consequence, $U \subset E$.

In fact, for all $x \in U$, by the construction of $U$, there exist infinitely many $k$, such that the $\beta$-expansion of $x$ starts with $\e_1,\ldots,\e_k,0^{\Gamma_k+1},\w_1,\ldots,\w_k$, where $\e_1,\ldots,\e_k\in \Sigma_{\beta}^k$ and $\w_i$ is defined as (\ref{wk}) for all $1\leq i\leq k$. Let $n_k=k+\G_k+1+\sum\limits_{j=1}^{k-1}(m_j+t_{m_j}+1)+m_k$, for convenience, we denote $n_k$ as $n_k=h_k+m_k$ where $h_k=k+\G_k+1+\sum\limits_{j=1}^{k-1}(m_j+t_{m_j}+1) \ll m_k$ by (\ref{lim}).  A simple observation on $I_{n}(x)$: if $n=n_k+t_{m_k}+1$, by the construction of $U$, we have
\begin{equation}\label{ii}
 I_n(x)= I(\e_1,\ldots,\e_{h_k},\e_1^\ast,\ldots,\e_{m_k}^\ast,0^{t_{m_k}+1}),
\end{equation} and $I_n(x)$ is full by Theorem \ref{lem3.2}(4).

Now we estimate the length of $I_n(x)$ when $n=n_k$. By the definition of $t_n$ and the criterion of
admissibility, it follows that
$$I(\e_1^\ast,\ldots,\e_{m_k}^\ast) = I(\e_1^\ast,\ldots,\e_{m_k}^\ast,0^{t_{m_k}}),$$
so
$I_{n_k}(x)=I(\e_1,\ldots,\e_{h_k},\e_1^\ast,\ldots,\e_{m_k}^\ast,0^{t_{m_k}})$ by (\ref{ii}).

Hence, we have $I_{n_k}(x)=I_{n_k+t_{m_k}}(x)$. Since
$I_{n_k+t_{m_k}+1}(x)$ is full, we have
$$\b^{-(n_k+t_{m_k}+1)} \leq |I_{n_k}(x)|=|I_{n_k+t_{m_k}}(x)| \leq
\b^{-(n_k+t_{m_k})}.$$

By (\ref{lim}), it immediately follows that $$\lim_{k\rightarrow\infty} \frac{-\log_\beta |I_{n_k}|}{n_k}=\lim_{k\rightarrow\infty} \frac{n_k+t_{m_k}}{n_k}=\lim_{k\rightarrow\infty} \frac{h_k+m_k+t_{m_k}}{h_k+m_k}=1+\l(\b).$$
\end{proof}

\textbf{Proof of Theorem \ref{r1}}  Since $U$ is dense in $[0,1]$ and it is a $G_\d$ set, we easily get that $U$ is residual in $[0,1]$ by Baire Category Theorem. Moreover, Lemma \ref{le3} ensures that $E$ is residual in $[0,1]$.

$\hfill\Box$

{\bf Acknowledgement}
The authors are grateful to Lingmin Liao for giving useful suggestion. This work was supported by NSFC 11371148 and 11411130372, Guangdong Natural Science Foundation 2014A030313230, and "Fundamental Research Funds for the Central Universities" SCUT 2015ZZ055.


\begin{thebibliography}{99}
\bibitem{PT} S. Albeverio, M. Pratsiovytyi and G. Torbin, {\it Topological and fractal
properties of subsets of real numbers which are not normal},  Bull. Sci. Math. 129 (2005), no. 8, 615-630.

\bibitem{BB} J.-C. Ban and B. Li, {\it The multifractal spectra for the recurrence rates of beta-transformations}, J. Math. Anal. Appl. 420 (2014), no. 2, 1662-1679.

\bibitem{BO} I. S. Baek and L. Olsen,  {\it Baire category and extremely non-normal
points of invariant sets of IFS's}, Discrete Contin. Dyn. Syst. 27 (2010), no. 3, 935-943.

\bibitem{BW} Y. Bugeaud and B.-W. Wang, {\it Distribution of full cylinders and the Diophantine properties of the orbits in $\b$-expansions}, J. Fractal Geom. 1 (2014), no. 2, 221-241.

\bibitem{B} G. Brown and Q. Yin,  {\it $\b$-expansions and frequency of zero}, Acta Math. Hungar. 84 (1999), no. 4, 275-291.

\bibitem{FE} K. J. Falconer, {\it Fractal Geometry: Mathematical Foundations and
Applications}, John Wiley and Sons, Ltd., Chichester, 1990.

\bibitem{AB} A.-H. Fan and B.-W. Wang, {\it On the lengths of basic intervals in beta expansions}, Nonlinearity 25 (2012), no. 5, 1329-1343.


\bibitem{HF} F. Hofbauer, {\it $\b$-shifts have unique maximal measure},  Monatsh. Math. 85 (1978), no. 3, 189-198.

\bibitem{H} J. Hyde, V. Laschos, L. Olsen, I. Petrykiewicz and A. Shaw,  {\it Iterated Ces\`{a}ro averages,
frequencies of digits and Baire category}, Acta Arith. 144 (2010),
no. 3, 287-293.


\bibitem{LW} B. Li and J. Wu, {\it Beta-expansion and continued fraction expansion}, J. Math. Anal. Appl. 339 (2008), no. 2, 1322-1331.

\bibitem{BT} B. Li, T. Persson, B.-W. Wang and J. Wu, {\it Diophantine approximation of the orbit of 1 in the dynamical system of beta expansions}, Math. Z. 276 (2014), no. 3-4, 799-827.

\bibitem{M} M. G. Madritsch, {\it Non-normal numbers with respect to Markov partitions},  Discrete Contin. Dyn. Syst. 34 (2014), no. 2, 663-676.

\bibitem{MI} M. G. Madritsch and I. Petrykiewicz, {\it Non-normal numbers in dynamical systems fulfilling the specification property}, Discrete Contin. Dyn. Syst. 34 (2014), no. 11, 4751-4764.

\bibitem{O} L. Olsen, {\it Extremely non-normal numbers}, Math. Proc. Cambridge Philos. Soc. 137 (2004), no. 1, 43-53.

\bibitem{JC} J. C. Oxtoby, {\it Measure and Category}, Springer, New York, 1996.

\bibitem{WP} W. Parry, {\it On the $\b$-expansions of real numbers},  Acta Math. Acad. Sci. Hungar,  11 (1960), 401-416.

\bibitem{PA} W. Parry, {\it  Entropy and generators in Ergodic Theory}, Benjamin, New York, 1969.

\bibitem{R} A. R\'{e}nyi, {\it Representations for real numbers and their ergodic properties}, Acta Math. Acad. Sci. Hung.
8 (1957), 477-93.

\bibitem{TD} D. Thompson, {\it Irregular sets, the $\b$-transformation and the almost
specification property},  Trans. Amer. Math. Soc. 364 (2012), no.
10, 5395-5414.

\end{thebibliography}
\end{document}